\newtheorem{theorem}{Theorem}[section]
\newtheorem{lemma}[theorem]{Lemma}
\newtheorem{proposition}[theorem]{Proposition}
\newtheorem{corollary}[theorem]{Corollary}
\theoremstyle{definition}
\newtheorem{definition}[theorem]{Definition}
\newtheorem{example}[theorem]{Example}
\theoremstyle{remark}
\newtheorem{remark}[theorem]{Remark}
\numberwithin{equation}{section}
\newcommand{\N}{\ensuremath{\mathbb{N}}}
\newcommand{\A}{\ensuremath{\mathcal{F}}}
\renewcommand{\c}{ {\mathbf{c}}}
\renewcommand{\d}{ {\mathbf{d}}}
\renewcommand{\u}{\ensuremath{\mathcal{U}}}
\newcommand{\ub}{\mathscr{U}}
\newcommand{\us}{\mathbf{U}}
\newcommand{\wus}{\widetilde{\mathbf{U}}}
\newcommand{\vb}{\mathscr{V}}
\newcommand{\vs}{\mathbf{V}}
\newcommand{\sa}{\mathbf{a}}
\renewcommand{\sb}{\mathbf{b}}
\newcommand{\sd}{\mathbf{d}}
\renewcommand{\sc}{\mathbf{c}}
\newcommand{\su}{\mathbf{u}}
\newcommand{\set}[1]{\left\{#1\right\}}
\newcommand{\la}{\lambda}
\newcommand{\ep}{\varepsilon}
\newcommand{\f}{\infty}
\newcommand{\de}{\alpha^*}
\newcommand{\al}{\alpha}
\newcommand{\lle}{\preccurlyeq}
\newcommand{\lge}{\succcurlyeq}
\newcommand{\si}{\sigma}
\newcommand{\ra}{\rightarrow}
\newcommand{\address}{Address: Department of Mathematics, University of North Texas, 1155 Union Circle \#311430, Denton, TX 76203-5017, USA; E-mail: allaart@unt.edu}
\title{An algebraic approach to entropy plateaus in non-integer base expansions}
\author{Pieter C. Allaart \footnote{\address}}
\date{\today}
\begin{document}

\maketitle

\begin{abstract}
For a positive integer $M$ and a real base $q\in(1,M+1]$, let $\u_q$ denote the set of numbers having a unique expansion in base $q$ over the alphabet $\{0,1,\dots,M\}$, and let $\us_q$ denote the corresponding set of sequences in $\{0,1,\dots,M\}^\N$. Komornik et al.~[{\em Adv. Math.} 305 (2017), 165--196] showed recently that the Hausdorff dimension of $\u_q$ is given by $h(\us_q)/\log q$, where $h(\us_q)$ denotes the topological entropy of $\us_q$. They furthermore showed that the function $H: q\mapsto h(\us_q)$ is continuous, nondecreasing and locally constant almost everywhere. The plateaus of $H$ were characterized by Alcaraz Barrera et al.~[{\em Trans. Amer. Math. Soc.}, {\bf 371} (2019), 3209--3258]. In this article we reinterpret the results of Alcaraz Barrera et al.~by introducing a notion of composition of fundamental words, and use this to obtain new information about the structure of the function $H$. This method furthermore leads to a more streamlined proof of their main theorem.

\bigskip
{\it AMS 2010 subject classification}: 11A63 (primary), 37B10, 37B40, 68R15 (secondary)

\bigskip
{\it Key words and phrases}: Beta-expansion, univoque set, topological entropy, entropy plateau, transitive subshift, composition of fundamental words.
\end{abstract}

\section{Introduction}

In recent years, there has been a great deal of interest in expansions of numbers in non-integer bases. Specifically, fix an integer $M\geq 1$, and for $q\in(1,M+1]$, consider expressions of the form
\begin{equation}
x=\sum_{i=1}^\f \frac{x_i}{q^i}=:\pi_q(x_1 x_2\dots),
\label{eq:q-expansion}
\end{equation}
where $x_i\in\{0,1,\dots,M\}$ for each $i$. The sequence $(x_i)$ is called a {\em $q$-expansion} of $x$; it exists if and only if $x\in J_{q,M}:=[0,M/(q-1)]$. Such non-integer base expansions were introduced by R\'enyi \cite{Renyi_1957} and studied further by Parry \cite{Parry_1960}. The subject seemed to be largely forgotten for about 30 years, until Erd\H{o}s and others \cite{Erdos_Horvath_Joo_1991,Erdos_Joo_1992,Erdos_Joo_Komornik_1990} breathed new life into it. These authors were interested in the set $\u_q$ of numbers in $J_{q,M}$ having a unique $q$-expansion, and in particular, in the set
\[
\ub:=\{q\in(1,M+1]: 1\in \u_q\}.
\]
Erd\H{o}s, Jo\'o and Komornik \cite{Erdos_Joo_Komornik_1990} showed for the case $M=1$ that $\u_q$ contains only the two endpoints $0$ and $1/(q-1)$ when $1<q\leq q_G:=(1+\sqrt{5})/2$.
Komornik and Loreti \cite{Komornik-Loreti-1998,Komornik-Loreti-2002} found the smallest element of $\ub$, which is now called the {\em Komornik-Loreti constant} and which we denote by $q_{KL}$; see below. Later, Glendinning and Sidorov \cite{Glendinning_Sidorov_2001} showed that $\u_q$ is countably infinite for $q_G<q<q_{KL}$; uncountable but of zero Hausdorff dimension for $q=q_{KL}$; and of positive Hausdorff dimension for $q>q_{KL}$. These results were generalized to larger alphabets by Baker \cite{Baker} and Kong, Li and Dekking \cite{Kong_Li_Dekking_2010}. Kong and Li \cite{Kong_Li_2015} further examined the Hausdorff dimension of $\u_q$, and more recently, Komornik, Kong and Li \cite{Komornik-Kong-Li-17} showed that this dimension is related to the topological entropy of the symbolic univoque set
\[
\us_q:=\{(x_i)\in\Omega_M: \pi_q((x_i))\in \u_q\},
\]
where $\Omega_M:=\{0,1,\dots,M\}^\N$. Namely,
\begin{equation}
\dim_H \u_q=\frac{h(\us_q)}{\log q}.
\label{eq:dimension-formula}
\end{equation}
Here the \emph{topological entropy} of any subset $X\subset\Omega_M$ is defined by
\[
h(X):=\lim_{n\ra\f}\frac{\log \# B_n(X)}{n},
\]
assuming the limit exists, where $B_n(X)$ denotes the set of all length $n$ prefixes of sequences from $X$, and ``$\log$" denotes the natural logarithm. (Since we will be considering different values of $M$ simultaneously, the ``neutral" base $e$ logarithm is used to avoid confusion.) Note that the limit always exists when $X$ is invariant under the left shift map $\sigma$, since the map $n\mapsto \#B_n(X)$ is then submultiplicative. This is the case, in particular, for $\us_q$.

Komornik, Kong and Li proved furthermore that the function 
$$H: q\mapsto h(\us_q)$$ 
is a devil's staircase. That is, $H$ is continuous, nondecreasing, and locally constant almost everywhere, and maps $(1,M+1]$ onto a nontrivial interval. Recently, the present author and Kong \cite{Allaart-Kong-2018a} discovered a gap in their proof, and gave a completely different demonstration of their results.
Kong and Li \cite{Kong_Li_2015} identified intervals on which $H$ is constant; their work was extended by Alcaraz Barrera et al.~\cite{AlcarazBarrera-Baker-Kong-2016}, who determined the {\em entropy plateaus} of $H$; that is, the maximal intervals of constancy of $H$. (This had already been done for the case $M=1$ by Alcaraz-Barrera \cite{Alcaraz_Barrera_2014}, who considered general symmetric subshifts of $\{0,1\}^\N$ and proved transitivity results and ergodic properties.) These entropy plateaus have since played an important role in the study of non-integer base expansions (e.g. \cite{Allaart-Baker-Kong-17,Allaart-Kong-2018b}). In this paper we detail the main results of \cite{AlcarazBarrera-Baker-Kong-2016}, and develop a new approach which not only presents these results in a new light, but also simplifies some of the proofs and provides new information on the nature of the entropy plateaus.

Before proceeding, we define some important notation. Let $\sigma: \Omega_M \to \Omega_M$ denote the left shift map, defined by $\sigma(d_1 d_2\dots):=(d_2 d_3\dots)$. Throughout the paper we will use the lexicographical ordering between sequences and words: For two sequences $(c_i), (d_i)\in\Omega_M$ we write $(c_i)\prec (d_i)$ or $(d_i)\succ (c_i)$ if there exists $n\in\N$ such that $c_1\ldots c_{n-1}=d_1\ldots d_{n-1}$ and $c_n<d_n$. Furthermore, we write $(c_i)\lle (d_i)$ if $(c_i)\prec (d_i)$ or $(c_i)=(d_i)$. Similarly, for two words $\c$ and $\d$ we say $\c\prec \d$ or $\d\succ\c$ if $\c0^\f\prec \d0^\f$.

Let $\al(q)=(\al_i(q))\in\Omega_M$ denote the \emph{quasi-greedy} $q$-expansion of $1$; that is, the lexicographically largest $q$-expansion of $1$ not ending with $0^\f$. The following useful characterization was proved in \cite[Proposition 2.3]{DeVries_Komornik_2009}:

\begin{lemma} \label{lem:quasi-greedy expansion-alpha-q}
The map $q\mapsto \al(q)$ is strictly increasing and bijective from $(1, M+1]$ to the set of sequences $(a_i)\in\Omega_M$ not ending with $0^\f$ and satisfying
\[
\si^n((a_i))\lle (a_i)\quad\forall n\ge 0.
\]
\end{lemma}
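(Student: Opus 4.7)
The plan is to prove the lemma in four stages: (a) show $\alpha(q)$ is well-defined for each $q\in(1,M+1]$; (b) prove strict monotonicity of $q\mapsto\alpha(q)$; (c) verify the shift inequalities $\sigma^n(\alpha(q))\lle\alpha(q)$; and (d) establish surjectivity onto the admissible set. The main obstacle is (d), whose proof combines admissibility with a key auxiliary lemma that also drives (b).

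For (a) and (c) I would work with the quasi-greedy recursion: set $t_0:=1$ and let $\alpha_{n+1}(q)$ be the largest digit $d\in\{0,\ldots,M\}$ such that $t_{n+1}:=qt_n-d>0$. The bound $q\le M+1$ keeps the recursion inside the alphabet and forces $t_n\in(0,1]$, from which both $\pi_q(\alpha(q))=1$ and non-termination of $\alpha(q)$ follow directly. The shift condition (c) then falls out by comparing the recursion seeded at $t_n$ with that seeded at $t_0=1$: since $t_n\le t_0$, one obtains $\alpha_{n+k}(q)\le\alpha_k(q)$ as long as the earlier digits agree, and the first strict inequality (if any) forces $\sigma^n(\alpha(q))\prec\alpha(q)$.

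The pivotal auxiliary lemma, used for (b) and (d), is: \emph{if $(c_i)\succ\alpha(p)$ for some $p\in(1,M+1]$ and $(c_i)$ does not end in $0^\f$, then $\pi_p((c_i))>1$}. The proof locates the first position $n$ where $c_n>\alpha_n(p)$, applies the quasi-greedy rule to obtain $\sum_{i\le n}\alpha_i(p)/p^i+1/p^n\ge 1$, and notes that the non-terminating tail upgrades $\sum_{i\le n}c_i/p^i\ge 1$ to $\pi_p((c_i))>1$; as a byproduct, this confirms the recursion in (a) produces the paper's lex-largest non-terminating $q$-expansion of $1$. For (b), $1<q<r\le M+1$ with $\alpha(q)\succ\alpha(r)$ gives $\pi_r(\alpha(q))>1$ by the lemma, contradicting $\pi_r(\alpha(q))<\pi_q(\alpha(q))=1$; equality $\alpha(q)=\alpha(r)$ is ruled out by strict monotonicity of $p\mapsto\pi_p((a_i))$ on non-terminating sequences.

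For (d), given admissible $(a_i)$, the function $f(q):=\pi_q((a_i))$ is continuous and strictly decreasing on $(1,\f)$ with $f(q)\to\f$ as $q\to 1^+$ and $f(M+1)\le 1$; the intermediate value theorem produces a unique $q_0\in(1,M+1]$ with $f(q_0)=1$, so $(a_i)\lle\alpha(q_0)$ by maximality. Suppose $(a_i)\prec\alpha(q_0)$, with first disagreement at position $n$ and $a_n<\alpha_n(q_0)$. Then the identity
\[
\pi_{q_0}(\sigma^n((a_i)))=\pi_{q_0}(\sigma^n(\alpha(q_0)))+\alpha_n(q_0)-a_n>1
\]
(using that $\sigma^n(\alpha(q_0))$ is not eventually zero) combined with the auxiliary lemma applied to $\sigma^n((a_i))$ gives $\sigma^n((a_i))\succ\alpha(q_0)\succ(a_i)$, contradicting admissibility $\sigma^n((a_i))\lle(a_i)$. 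Hence $(a_i)=\alpha(q_0)$, completing the proof.
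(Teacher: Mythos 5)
The paper does not prove this lemma: it simply imports it as Proposition~2.3 of de~Vries--Komornik~\cite{DeVries_Komornik_2009}, so your proof cannot be compared to one in the text, only checked on its own terms.

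Parts (a), (b) and (c), together with your auxiliary lemma, are essentially correct and are a reasonable route to the ``only if'' direction and to strict monotonicity. The gap is in (d). Your auxiliary lemma states the implication
\[
(c_i)\succ\al(p)\ \text{and $(c_i)$ non-terminating}\ \Longrightarrow\ \pi_p((c_i))>1 ,
\]
but at the end of (d) you compute $\pi_{q_0}\bigl(\sigma^n((a_i))\bigr)>1$ and then ``combine with the auxiliary lemma applied to $\sigma^n((a_i))$'' to conclude $\sigma^n((a_i))\succ\al(q_0)$. That step uses the \emph{converse} of your lemma, not the lemma or its contrapositive, and the converse is false in general. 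A concrete counterexample: take $M=2$, $p=3/2$; then $\al(p)=10100\dots$, while $(c_i)=00222\dots$ satisfies $(c_i)\prec\al(p)$ and is non-terminating, yet $\pi_p((c_i))=16/9>1$. So $\pi_p((c_i))>1$ does not force $(c_i)\succ\al(p)$.

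What you actually have available at that point is stronger than your lemma uses: \emph{every} shift of $\sigma^n((a_i))$ is $\lle (a_i)\prec\al(q_0)$, because $(a_i)$ is admissible. The correct auxiliary statement you need (and which is true) is
\[
\sigma^k((c_i))\lle\al(p)\ \text{for all }k\ge 0\ \Longrightarrow\ \pi_p((c_i))\le 1,
\]
and its proof is genuinely different from that of your lemma. One clean way: set $n_0=0$, and inductively let $n_{j+1}=n_j+m_{j+1}$ where $m_{j+1}$ is the first index at which $\sigma^{n_j}((c_i))$ and $\al(p)$ disagree. Writing $t_m$ for the remainders of the quasi-greedy recursion and using $c_{n_j+m_{j+1}}\le\al_{m_{j+1}}(p)-1$, one computes
\[
\pi_p\bigl(\sigma^{n_j}((c_i))\bigr)-1\ \le\ \frac{1}{p^{\,m_{j+1}}}\Bigl(\pi_p\bigl(\sigma^{n_{j+1}}((c_i))\bigr)-t_{m_{j+1}}-1\Bigr),
\]
and iterating (using the trivial bound $\pi_p\le M/(p-1)$, or the base case if the ladder terminates with $\sigma^{n_j}((c_i))=\al(p)$) gives $\pi_p((c_i))-1\le \bigl(M/(p-1)-1\bigr)p^{-n_j}\to 0$. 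Replacing your converse-application of the auxiliary lemma with this statement (applied to $(c_i)=\sigma^n((a_i))$, $p=q_0$) closes the gap: you then get $\pi_{q_0}\bigl(\sigma^n((a_i))\bigr)\le1$, contradicting your earlier computation that it exceeds $1$, so $(a_i)=\al(q_0)$.
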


Let $(\tau_i)_{i=1}^\infty=1101\ 0011\ 00101101\dots$ be the (shifted) {\em Thue-Morse sequence}, defined by $\tau_i:=s_i\!\!\mod 2$, where $s_i$ is the sum of the digits in the binary representation of $i$. 
Recall from \cite{Komornik-Loreti-2002} that the Komornik-Loreti constant $q_{KL}=q_{KL}(M)$ satisfies
$\al(q_{KL})=(\lambda_i)$,
where for each $i\ge 1$,
\begin{equation} 
\label{eq:lambda-i}
\la_i=\la_i(M):=\begin{cases}
k+\tau_i-\tau_{i-1} & \qquad\textrm{if \quad$M=2k$},\\
k+\tau_i & \qquad\textrm{if \quad$M=2k+1$}.
\end{cases}
\end{equation}

For a word $c_1\dots c_k$ with $c_k<M$, we write $c_1\dots c_k^+:=c_1\dots c_{k-1}(c_k+1)$. Likewise, for a word $c_1\dots c_k$ with $c_k>0$, we write $c_1\dots c_k^-:=c_1\dots c_{k-1}(c_k-1)$. For any word $c_1\dots c_k$, its {\em reflection} is the word $\overline{c_1\dots c_k}:=(M-c_1)\dots(M-c_k)$. For an infinite sequence $(c_i)\in\Omega_M$, we similarly define $\overline{(c_i)_i}:=(M-c_i)_i$.
Using this notation, we define special bases $q_n'=q_n'(M)$ by
\[
\alpha(q_n')=\begin{cases}
\lambda_1\dots\lambda_{2^{n-1}}\big(\overline{\lambda_1\dots\lambda_{2^{n-1}}}^+\big)^\f & \mbox{if $M$ is even},\\
\lambda_1\dots\lambda_{2^n}\big(\overline{\lambda_1\dots\lambda_{2^n}}^+\big)^\f & \mbox{if $M$ is odd}.
\end{cases}
\]
We denote $q_1'$ also by $q_T$; it is called the {\em transitive base} in \cite{AlcarazBarrera-Baker-Kong-2016}. It can be deduced from Lemma \ref{lem:quasi-greedy expansion-alpha-q} that $q_n'$ is well defined; see \cite[Lemma 4.3]{AlcarazBarrera-Baker-Kong-2016} for the details. Finally, we define a special base $q_G$ (called a {\em generalized golden ratio} in \cite{Baker}) by
\begin{equation}
\alpha(q_G)=\begin{cases}
k^\infty & \mbox{if $M=2k$},\\
\big((k+1)k\big)^\infty & \mbox{if $M=2k+1$}.
\end{cases}
\label{eq:q_G}
\end{equation}
When $M=1$, we obtain $q_G=(1+\sqrt{5})/2$ as before. Let
\[
\vs:=\big\{(c_i)\in\Omega_M:\ \overline{(c_i)}\lle \sigma^n((c_i))\lle (c_i)\ \ \forall n\geq 0\big\}.
\]
In \cite{AlcarazBarrera-Baker-Kong-2016}, irreducible sequences in $\Omega_M$ were defined as follows:

\begin{definition}[\cite{AlcarazBarrera-Baker-Kong-2016}] \label{def:irreducible-sequence}
A sequence $(a_i)\in\vs$ is {\em irreducible} if for every $j\in\N$ the following implication holds:
\begin{equation}
\left[a_j>0\quad \mbox{and}\quad (a_1\dots a_j^-)^\f \in \vs\right] \qquad\Longrightarrow \qquad a_1\dots a_j (\overline{a_1\dots a_j}^+)^\f \prec (a_i).
\label{eq:irreducible-condition}
\end{equation}
A sequence $(a_i)\in\vs$ is {\em $*$-irreducible} if there exists $n\in\N$ such that $\alpha(q_{n+1}')\lle (a_i)\prec \alpha(q_n')$, and \eqref{eq:irreducible-condition} holds for every $j>2^n$ if $M$ is even, or for every $j>2^{n+1}$ if $M$ is odd.
\end{definition}

\begin{definition}[\cite{AlcarazBarrera-Baker-Kong-2016}] 
An interval $[p_L,p_R]\subset(q_{KL},M+1]$ is {\em irreducible} ({\em $*$-irreducible}) if $\alpha(p_L)$ is irreducible ($*$-irreducible) and there is a word $a_1\dots a_m$ with $a_m<M$ such that
\[
\alpha(p_L)=(a_1\dots a_m)^\f,  \qquad\mbox{and}\qquad \alpha(p_R)=a_1\dots a_m^+(\overline{a_1\dots a_m})^\f.
\]
\end{definition}

\begin{definition}
An interval $[p_L,p_R]$ is a {\em plateau of $H$}, or {\em entropy plateau}, if $[p_L,p_R]$ is a maximal interval (in the partial order of set inclusion) on which $H$ is positive and constant.
\end{definition}

The main result of \cite{AlcarazBarrera-Baker-Kong-2016} is:

\begin{theorem} \label{thm:entropy-plateaus}
An interval $[p_L,p_R]\subset (q_{KL},M+1]$ is an entropy plateau if and only if it is either irreducible or $*$-irreducible. Moreover, the irreducible entropy plateaus lie in $(q_T,M+1]$, and the $*$-irreducible entropy plateaus lie in $(q_{KL},q_T)$.
\end{theorem}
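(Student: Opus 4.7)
The plan rests on a composition operation for \emph{fundamental words} $\sa = a_1 \ldots a_m$ (with $a_m < M$), where a word is fundamental when $(a_1 \ldots a_m)^\infty$ satisfies the irreducibility condition \eqref{eq:irreducible-condition}. The operation mirrors the Thue--Morse substitution $\sa \mapsto \sa\,\overline{\sa}^+$ underlying the bases $q_n'$, and produces every $*$-irreducible sequence as a composition of a fundamental word with a Thue--Morse prefix of length $2^n$ (or $2^{n+1}$). For the \textbf{sufficiency} direction, I would show that if $\alpha(p_L) = (a_1 \ldots a_m)^\infty$ is irreducible and $\alpha(p_R) = a_1 \ldots a_m^+ (\overline{a_1 \ldots a_m})^\infty$, then $B_n(\us_q) = B_n(\us_{p_L})$ for every $q \in [p_L, p_R]$ and every $n$, giving $H(q) = H(p_L)$. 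The inclusion $B_n(\us_{p_L}) \subseteq B_n(\us_q)$ is monotonicity in $q$; the reverse uses \eqref{eq:irreducible-condition} to exclude any new admissible block appearing in $\us_{p_R}$, as such a block would have to violate one of the two lexicographic bounds $\overline{(c_i)}\lle \sigma^n((c_i))\lle (c_i)$ defining $\vs$. Maximality of $[p_L, p_R]$ as an interval of constancy follows by exhibiting, in either direction, a new block that appears in $\us_q$ once $q$ is perturbed past an endpoint. The $*$-irreducible case is handled in parallel by peeling off the outer Thue--Morse wrapper via the composition and reducing to an inner irreducible problem.

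For the \textbf{necessity} direction, assume $[p_L, p_R]$ is an entropy plateau. I would first show that $\alpha(p_L)$ must be purely periodic of the form $(a_1 \ldots a_m)^\infty$ with $a_m < M$ and that $\alpha(p_R) = a_1 \ldots a_m^+ (\overline{a_1 \ldots a_m})^\infty$: using Lemma \ref{lem:quasi-greedy expansion-alpha-q} and the bijection $q \mapsto \alpha(q)$, the endpoints of a plateau are forced to coincide with the unique bases whose quasi-greedy expansions exhibit this "opening" and "closing" algebraic shape, for otherwise $B_n(\us_q)$ would strictly jump at some interior $q$. Next I would argue that $\alpha(p_L)$ satisfies \eqref{eq:irreducible-condition} in the appropriate range of indices: if it failed at some $j$, the truncation $a_1 \ldots a_j^-$ would itself be a fundamental word, and the base $q'$ with $\alpha(q')=a_1 \ldots a_j^- (\overline{a_1 \ldots a_j^-}^+)^\infty$ would satisfy $q'<p_L$ and $B_n(\us_{q'}) = B_n(\us_{p_L})$ for all $n$, contradicting maximality. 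The dichotomy between irreducible and $*$-irreducible plateaus then emerges by comparing $p_L$ with $q_T$: if $p_L > q_T$ there is no outer Thue--Morse wrapper to absorb a low-index violation of \eqref{eq:irreducible-condition}, so the condition must hold for all $j$; whereas if $p_L \in (q_{KL}, q_T)$ then $\alpha(p_L) \prec \alpha(q_n')$ for a unique $n$, and \eqref{eq:irreducible-condition} is automatic for $j\le 2^n$ (respectively $j\le 2^{n+1}$) because the composition structure supplied by $\alpha(q_n')$ manufactures such "failures" harmlessly.

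The chief technical hurdle lies in the necessity step above: verifying that if \eqref{eq:irreducible-condition} fails at index $j$, the truncated word $a_1 \ldots a_j^-$ can genuinely be composed into a strictly smaller base $q'<p_L$ whose symbolic univoque set has the \emph{same} block-counts as $\us_{p_L}$. This demands a closure property of $\vs$ under the composition, a careful lexicographic computation to confirm $\alpha(q') \prec \alpha(p_L)$, and a length-by-length bijection of admissible blocks. The composition formalism is what should make each of these verifications routine rather than ad hoc, which is precisely how the new approach streamlines the original proof in \cite{AlcarazBarrera-Baker-Kong-2016}.
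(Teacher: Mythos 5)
There is a fundamental error in your sufficiency step that undermines both directions of the argument: the claim that $B_n(\us_q)=B_n(\us_{p_L})$ for every $q\in[p_L,p_R]$ is false. Since $\alpha(q)$ is \emph{strictly} increasing in $q$ (Lemma \ref{lem:quasi-greedy expansion-alpha-q}), the constraints \eqref{eq:characterization-unique expansion} genuinely relax as $q$ grows, and new admissible blocks do appear inside the plateau. For example, with $M=1$ and $\sa=110$, the periodic sequence $\bigl(\sa^+\overline{\sa}\bigr)^\infty=(111001)^\infty$ lies in $\vs_{p_R}$ but not in $\vs_{p_L}$. The paper's Proposition \ref{prop:constant-entropy} does not prove equality of block sets; it proves an \emph{entropy} equality by a counting argument: any sequence in $\vs_{p_R}\setminus\vs_{p_L}$ must eventually enter the graph-coded set $X_\sa$, whose block growth rate $2^{1/m}$ is at most $h(\vs_{p_L})$, so the enlarged system $\vs_{p_R}$ picks up strictly new blocks but no extra exponential growth. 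Your maximality argument inherits the same flaw in reverse: exhibiting a new block once you cross an endpoint does not by itself force the entropy to jump, since new blocks already appear continuously inside the plateau. The step that actually produces the strict entropy increase across the right endpoint is topological transitivity: the paper proves (Proposition \ref{prop:transitive}) that $\vs_{q}$ is a transitive subshift whenever $\alpha(q)$ is irreducible, and then invokes the fact that a proper subsystem of a transitive subshift of finite type has strictly smaller entropy (Lind--Marcus, Corollary 4.4.9). Without this or an equivalent substitute, there is no way to conclude that the plateau ends where you claim it ends.

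Your heuristic for the necessity direction and for the $*$-irreducible case is aimed in the right direction — the paper does indeed reduce $*$-irreducible plateaus to irreducible plateaus over $\{0,1\}$ by ``peeling off'' a Thue--Morse wrapper — but the content of that reduction is not a block-by-block bijection of univoque sets either. It is the entropy relation $H(q)=c_M\,H^*(\hat{\Phi}_\su(q))$ of Proposition \ref{prop:entropy-bridge}, obtained via the right-resolving $|\su|$-block code $\Phi_\su$ mapping an entropy-full slice of $\wus_q$ onto an entropy-full slice of $\wus_{\hat q}^*$. Likewise, in your necessity argument the claim $B_n(\us_{q'})=B_n(\us_{p_L})$ is again an equality of block sets that does not hold; what can be controlled is the growth rate. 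In short, you need to replace the premise of block-set rigidity throughout with (a) a counting estimate comparing $\vs_{p_R}$ to $\vs_{p_L}\cup X_\sa$ for the interior of the plateau, and (b) transitivity of $\vs_q$ at irreducible $q$ for maximality. The composition formalism is then used to prove $H(q)=c_M H^*(\hat q)$ and to transport both (a) and (b) from $M=1$ to $(q_{KL},q_T)$.
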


In this paper we present some new ideas that further illuminate the nature of the entropy plateaus, and in the process simplify the proof of Theorem \ref{thm:entropy-plateaus}. The paper is organized as follows. First, in Section \ref{sec:composition}, we introduce a family of intervals of which the entropy plateaus form a special subfamily. Each of these intervals is generated by a specific word $a_1\dots a_m$, which we call {\em fundamental}. We then define a notion of composition of fundamental words which allows us to put a semigroup structure on the set of all fundamental words in the special but important case $M=1$. This notion of composition yields a natural algebraic interpretation of the concepts of irreducible and $*$-irreducible intervals. We use it in Section \ref{sec:entropy} to uncover a direct connection between the $*$-irreducible intervals (for any $M$) and the irreducible intervals for $M=1$. This will provide new information about the behavior of the entropy function $H$ on the interval $(q_{KL},q_T]$. In particular, we derive the part of Theorem \ref{thm:entropy-plateaus} concerning $*$-irreducible plateaus in a very conceptual way from the part of the theorem concerning irreducible plateaus. For completeness, we include in Section \ref{sec:irreducible-plateaus} a more streamlined proof of the latter.

\section{Composition of fundamental words} \label{sec:composition}

This section borrows ideas and results from the paper \cite{Allaart-Kong-2018b} by the author and Kong. The interested reader is referred to that paper for the proofs.

\begin{definition} \label{def:fundamental-words}
A word $\sa=a_1\dots a_m\in\{0,1,\dots,M\}^m$ with $m\geq 2$ is {\em fundamental} if
\begin{equation*} 
\overline{a_1\ldots a_{m-i}}\lle a_{i+1}\ldots a_m\prec a_1\ldots a_{m-i}\quad\forall ~1\le i<m.
\end{equation*}
When $M\geq 2$, the ``word" $a_1\in\{0,1,\dots,M\}$ is {\em fundamental} if $\overline{a_1}\leq a_1<M$.
\end{definition}

An important observation is that $\sa^\f\in\vs$ for any fundamental word $\sa$. Note also that a fundamental word cannot end in the digit $M$.

\begin{remark}
There is a (near) one-to-one correspondence between fundamental words and the {\em primitive} words defined in \cite{AlcarazBarrera-Baker-Kong-2016}: a word $\sa$ is fundamental if and only if $\sa^+$ is primitive (with the exception of fundamental words of length $1$).
\end{remark}

Let $\A_M$ denote the set of all fundamental words with alphabet $\{0,1,\dots,M\}$.
For each $\sa\in\A_M$, there is by Lemma \ref{lem:quasi-greedy expansion-alpha-q} a unique interval $J_\sa=[q_L(\sa),q_R(\sa)]$ such that
\begin{equation}
\alpha(q_L(\sa))=\sa^\f, \qquad \alpha(q_R(\sa))=\sa^+(\overline{\sa})^\f.
\label{eq:q_L-and-q_R}
\end{equation}
We call $J_\sa$ a {\em fundamental interval generated by} the word $\sa$.
Observe for any $\sa\in\A_M$ that 
\begin{equation}
\overline{\sa^+}\prec\overline{\sa}\lle\sa\prec\sa^+.
\label{eq:label-ordering}
\end{equation}
The middle inequality is not always strict: $\overline{\sa}=\sa$ in the exceptional but important case that $M$ is even and $\sa=a_1=M/2$.

Figure \ref{fig1} shows a directed graph $G=(V,E)$ with edge set $E=\{e_0,e_1,\dots,e_4\}$ and with two edge labelings. In the figure, $\sa$ denotes a fundamental word. In view of \eqref{eq:label-ordering}, the labeled graph $\mathcal G_\sa=(G, \mathcal L_\sa)$ with labeling $\mathcal L_\sa: E\to L_\sa:=\big\{\sa, \sa^+, \overline{\sa}, \overline{\sa^+}\big\}$ is right-resolving, i.e. the out-going edges from the same vertex in $\mathcal G_\sa$ have different labels. Let $X_\sa$ be the set of infinite sequences generated by the automata $\mathcal G_\sa=(G, \mathcal L_\sa)$, beginning at the ``Start" vertex (cf.~\cite{Lind_Marcus_1995}).  We emphasize that each digit $\sd$ in $L_\sa$ is a block of length $|\sa|$, and any sequence in $X_\sa$ is an infinite concatenation of blocks from $L_\sa$. We also let $X_\sa^{fin}$ denote the set of all finite prefixes of sequences in $X_\sa$ whose length is a multiple of $|\sa|$.

Likewise, the labeled graph $\mathcal G^*=(G, \mathcal L^*)$ with labeling $\mathcal L^*:E \to\set{0,1}$ is right-resolving. Let $X^*\subset \{0,1\}^\N$ be the set of all infinite sequences generated by the automata $\mathcal G^*$, and note that $X^*=\{(x_i)\in\{0,1\}^\N:x_1=1\}$. Thus, for each $q\in(1,2]$ the quasi-greedy expansion $\de(q)$ of $1$ in base $q$ is an element of $X^*$, so that $\alpha^*(q)$ is the sequence of labels on an infinite path generated by the automata $\mathcal{G}^*$. Since the labeling $\mathcal{L}^*$ is right-resolving, this path is unique.


\begin{figure}[ht]
  \centering
  \begin{tikzpicture}[->,>=stealth',shorten >=1pt,auto,node distance=4cm,
                    semithick]

  \tikzstyle{every state}=[minimum size=0pt,fill=none,draw=black,text=black]

  \node[state] (A)                    { $A$};
  \node[state]         (B) [ right of=A] {$B$ };
  \node[state]         (C) [ above of=A,yshift=-1cm] {$Start$};

  \path[->,every loop/.style={min distance=0mm, looseness=40}]
  (C) edge[->,left] node{$e_0:\; \sa^+\; / \; 1$} (A)
  
   (A) edge [loop left,->]  node {$e_4:\; \overline{\sa}\;/\; 1$} (A)
            edge  [bend left]   node {$e_1:\; \overline{\sa^+}\;/\; 0$} (B)

   (B) edge [loop right] node {$e_2:\; \sa\; /\; 0$} (B)
            edge  [bend left] node {$e_3:\; \sa^+\; / \; 1$} (A);
\end{tikzpicture}
\caption{The labeled graph $\mathcal G_\sa=(G, \mathcal L_\sa)$ with labeling $\mathcal L_\sa: E\to L_\sa:=\big\{\sa, \sa^+, \overline{\sa}, \overline{\sa^+}\big\}$, and the labeled graph $\mathcal G^*=(G, \mathcal L^*)$ with labeling $\mathcal L^*: E\to\set{0, 1}$.}
\label{fig1}
\end{figure}

For a given word $\sa\in\A_M$, 
we define a map $\Phi_\sa: X_\sa\to X^*$ as follows. Given a sequence $(\sd_i)\in X_\sa$ of blocks from $L_\sa$, there is a unique infinite path $e_{i_1}e_{i_2}\dots$ in $G$ with $i_1=0$ such that $\mathcal{L}_\sa(e_{i_1}e_{i_2}\dots)=\sd_1\sd_2\dots$. Define $\Phi_\sa((\sd_i))$ by
\[
\Phi_\sa(\sd_1\sd_2\dots):=\mathcal{L}^*(e_{i_1}e_{i_2}\dots).
\]
Similarly, for any finite word $\sd_1\dots\sd_k$ starting with $\sd_1=\sa^+$ generated by the labeling $\mathcal{L}_\sa$ along a path $e_{i_1}\dots e_{i_k}$ with $i_1=0$, we define $\Phi_\sa(\sd_1\dots\sd_k):=\mathcal{L}^*(e_{i_1}\dots e_{i_k})$. Likewise, for a word $\sd_1\dots\sd_k$ starting with $\sd_1=\overline{\sa^+}$ generated by the labeling $\mathcal{L}_\sa$ along a path $e_{i_1}\dots e_{i_k}$ (necessarily with $i_1=1$), we define $\Phi_\sa(\sd_1\dots\sd_k)$ in the same way. When $\sa\neq\overline{\sa}$, we can also define $\Phi_\sa(\sd_1\dots\sd_k)$ for words $\sd_1\dots \sd_k$ starting with $\sd_1=\sa$ or $\overline{\sa}$ in the same way. However, we leave $\Phi_\sa(\sd_1\dots\sd_k)$ undefined for such words when $\sa=\overline{\sa}$; the reason is that in this case, the edges $e_2$ and $e_4$ have the same label under $\mathcal{L}_\sa$, but different labels under $\mathcal{L}^*$, causing some ambiguity.


\begin{remark}
The definition of $\Phi_\sa$ slightly generalizes the definition of the maps $\Phi_J$ introduced in \cite[Section 3]{Allaart-Kong-2018b}. But our definitions here require a bit more care due to the possibility that $\sa=\overline{\sa}$. Nonetheless, since the graph $\mathcal{G}_\sa$ is right-resolving even then, all of the proofs from \cite[Section 3]{Allaart-Kong-2018b} go through in the same way. We therefore omit them here.
\end{remark}

\begin{remark}
As pointed out by a referee, the map $\Phi_\sa$ is somewhat reminiscent of the renormalization of kneading invariants of Lorenz maps in \cite{Glendinning_Hall_1996}, though the author did not see a direct connection.
\end{remark}

\begin{lemma} \label{lem:again-fundamental}
If $\sc\in X_\sa^{fin}$ is fundamental, then $\Phi_\sa(\sc)$ is fundamental.
\end{lemma}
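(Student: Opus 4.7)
The plan is to decompose $\sc\in X_\sa^{fin}$ into blocks from $L_\sa$, apply the fundamentality of $\sc$ at shifts that are multiples of $m:=|\sa|$, and then transfer the resulting block-level inequalities to $\sb:=\Phi_\sa(\sc)$ through two structural features of $\Phi_\sa$. First I would write $\sc=\sd_1\sd_2\cdots\sd_k$ with each $\sd_j\in L_\sa$. The generating path of $\sc$ in $\mathcal G_\sa$ must start with $e_0$, the unique outgoing edge from ``Start'', so $\sd_1=\sa^+$ and $b_1:=\mathcal L^*(e_0)=1$; hence $\sb=b_1\cdots b_k\in\{0,1\}^k$ begins with $1$, as every fundamental word at level $M=1$ should.

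The two features I would establish are: (i) \emph{reflection commutation}, $\Phi_\sa(\overline{\sd})=\overline{\Phi_\sa(\sd)}$ for every $\sd\in L_\sa$, which follows because $\sa^+\leftrightarrow\overline{\sa^+}$ swaps the $\mathcal L^*$-labels $1\leftrightarrow 0$ and $\sa\leftrightarrow\overline{\sa}$ swaps $0\leftrightarrow 1$; and (ii) \emph{order preservation along common-state sub-paths}, which holds because from state $A$ the two available next-blocks satisfy $\overline{\sa^+}\prec\overline{\sa}$ with labels $0<1$, and from $B$ the pair $\sa\prec\sa^+$ has labels $0<1$. Invoking the fundamentality of $\sc$ at each block-aligned shift $\ell=im$ produces
\[\overline{\sd_1\cdots\sd_{k-i}}\lle \sd_{i+1}\cdots\sd_k\prec \sd_1\cdots\sd_{k-i},\qquad 1\le i<k.\]
I would then apply (i) to rewrite $\overline{\sd_1\cdots\sd_{k-i}}=\overline{\sd_1}\cdots\overline{\sd_{k-i}}$ with $\Phi_\sa(\overline{\sd_j})=\overline{b_j}$, and use (ii) to locate the first position $j$ at which the two block sub-sequences diverge: agreement on blocks $1,\dots,j-1$ pins down a common state preceding block $j$, and then the block-order at position $j$ translates cleanly into the label-order. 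This yields $\overline{b_1\cdots b_{k-i}}\lle b_{i+1}\cdots b_k\prec b_1\cdots b_{k-i}$, which are exactly the fundamentality conditions on $\sb$.

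The hard part will be the order-matching step, because $\Phi_\sa$ is \emph{not} order-preserving on $L_\sa$ globally: the distinct blocks $\sa^+$ and $\overline{\sa}$ both carry label $1$, and $\sa$ and $\overline{\sa^+}$ both carry label $0$. The resolution is the right-resolving structure of $\mathcal G_\sa$: from any single vertex the two outgoing edges carry distinct labels, and among these the block-order and label-order coincide, so once the common state prior to position $j$ is fixed by block-agreement, no ambiguity remains. Boundary cases --- notably $j=1$, where one sub-path starts at ``Start'' while the other departs from a later vertex --- can be handled by invoking the fundamentality of $\sc$ at non-block-aligned shifts to restrict the initial block $\sd_{i+1}$ of the shifted sub-path. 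This is precisely the content of the parallel arguments in \cite[Section 3]{Allaart-Kong-2018b}, which the remark indicates transfer unchanged to the present setting.
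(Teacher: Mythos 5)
Your outline has the right shape — decompose into $L_\sa$-blocks, use the block-aligned fundamentality of $\sc$, and transfer the ordering through $\Phi_\sa$ using reflection-commutation and the right-resolving structure — and your identification of the ``hard part'' (that $\Phi_\sa$ collapses $\sa^+$ with $\overline{\sa}$ and $\sa$ with $\overline{\sa^+}$) is exactly correct. But your resolution of that hard part has a genuine gap. The right-resolving argument works whenever the first block of disagreement is at position $j_0>1$: then $\sd_1=\sd_{i+1}=\sa^+$ forces both sub-paths to pass through vertex $A$ after one block, and (since $\mathcal G_\sa$ is right-resolving) the common label-sequence $\sd_2\dots\sd_{j_0-1}=\sd_{i+2}\dots\sd_{i+j_0-1}$ then determines a common edge-path, hence a common source vertex before block $j_0$, so block-order and $\mathcal L^*$-order agree there. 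Where your argument breaks is precisely the case $j_0=1$ with $\sd_{i+1}=\overline{\sa}$: then $b_{i+1}=1=b_1$, the block inequality $\sd_{i+1}\cdots\sd_k\prec\sd_1\cdots\sd_{k-i}$ is settled at block $1$ and says \emph{nothing} about blocks $2,3,\dots$, and there is no common state before block $1$ to invoke. Your appeal to ``non-block-aligned shifts'' to exclude $\sd_{i+1}=\overline{\sa}$ is not substantiated and in fact cannot work when $|\sa|=1$, since then no non-block-aligned shifts exist.

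The failure is not cosmetic. Take $M=2$, $\sa=1\in\mathcal F_2$, so $\sa^+=2$, $\overline{\sa}=1$, $\overline{\sa^+}=0$. Then $\sc=211\in X_\sa^{fin}$ (path $\mathrm{Start}\,e_0\,A\,e_4\,A\,e_4\,A$) and $\sc$ is fundamental: the checks $01\lle 11\prec 21$ and $0\lle 1\prec 2$ both hold. Yet $\Phi_\sa(\sc)=\mathcal L^*(e_0e_4e_4)=111$, which is \emph{not} fundamental in $\mathcal F_1$ since $11\prec 11$ fails. (A non-degenerate example with $|\sa|\ge 2$: $M=2$, $\sa=21$, $\sc=220101$ is fundamental in $X_\sa^{fin}$ but $\Phi_\sa(\sc)=111$.) What these examples show is that the statement as written needs the implicit strengthening that the generating path of $\sc$ terminate at vertex $B$ — equivalently, that $\sc^\f\in X_\sa$, or that the last block $\sd_k\in\{\sa,\overline{\sa^+}\}$, which is exactly what excludes the problematic $\sd_{i+1}=\overline{\sa}$ trailing blocks. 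The paper escapes this by deferring to the referenced lemma, where the map is applied to prefixes arising from a nested fundamental interval $J_\sb\subset J_\sa$ (as in Lemma \ref{lem:irreducible-implies-maximal}), a hypothesis that guarantees the terminating-at-$B$ condition. Your blind proof neither states nor uses such a hypothesis, so the $j_0=1$ boundary case remains unresolved and in general unresolvable.
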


\begin{proof}
This follows as in the proof of \cite[Lemma 3.7]{Allaart-Kong-2018b}.
\end{proof}

Going forward, superscript $^*$ indicates that the alphabet $\{0,1\}$ is intended, i.e. $M=1$.

In \cite[Proposition 3.5]{Allaart-Kong-2018b} it is shown that $\Phi_\sa: X_\sa\to X^*$ is a strictly increasing bijection. Let 
\[
\vb:=\{q\in(1,M+1]: \alpha(q)\in\vs\},
\]
and $\vb^*:=\vb$ when $M=1$. By \cite[Proposition 3.4]{Allaart-Kong-2018b}, $\alpha(q)\in X_{\sa}$ for $q\in (q_L(\sa),q_R(\sa)]\cap\vb$, where $q_L(\sa)$ and $q_R(\sa)$ were defined in \eqref{eq:q_L-and-q_R}. Hence, $\Phi_\sa$ induces a map
\[
\hat{\Phi}_\sa: (q_L(\sa),q_R(\sa)] \cap \vb \to \vb^*, \qquad q\mapsto (\al^*)^{-1}\circ\Phi_\sa\circ\al(q).
\]
The map $\hat{\Phi}_\sa$ is an increasing homeomorphism; see \cite[Proposition 3.8]{Allaart-Kong-2018b}.

\begin{definition} \label{def:fundamental-word-composition-M}
For $\sa\in\A_M$ and $\sb\in\A_1$ we define the {\em composition} $\sa\circ\sb$ by
\[
\sa\circ\sb:=\Phi_\sa^{-1}(\sb).
\]
\end{definition}

Note that this is well defined since $\sb$ is a prefix of a sequence in $X^*$. The composition $\sa\circ\sb$ should not be confused with the concatenation $\sa\sb$. Indeed, $|\sa\circ\sb|=|\sa||\sb|$, whereas $|\sa\sb|=|\sa|+|\sb|$.
Observe that the composition of two words in $\A_M$ is not defined when $M\geq 2$.

\begin{example} \label{ex:composition}
Let $M=1$ and take $\sa=10$, $\sb=110$. Then:
\begin{enumerate}
\item $\sa\circ\sb=\Phi_\sa^{-1}(110)=\sa^+\overline{\sa}\overline{\sa^+}=110100$;
\item $\sb\circ\sa=\Phi_\sb^{-1}(10)=\sb^+\overline{\sb^+}=111000$.
\end{enumerate}
\end{example}

\begin{example}
Let $M=2$ and take $\sa=1, \sb=1110$. Then $\sa\in \A_2$, $\sb\in\A_1$, and $\sa\circ\sb=\sa^+\overline{\sa}^2\overline{\sa^+}=2110$.
\end{example}

\begin{lemma} \label{lem:closed-under-composition}
If $\sa\in \A_M$ and $\sb\in\A_1$, then $\sa\circ\sb\in\A_M$.
\end{lemma}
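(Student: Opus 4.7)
The plan is to establish the two equivalent conditions to being fundamental for $\sc:=\sa\circ\sb$: (i) $\sc^\infty\in\vs$, and (ii) $\sc$ has smallest period $m:=|\sa|\cdot|\sb|$. The equivalence holds because truncating the inequalities $\overline{\sc^\infty}\preccurlyeq\sigma^i(\sc^\infty)\preccurlyeq\sc^\infty$ yields $\overline{c_1\ldots c_{m-i}}\preccurlyeq c_{i+1}\ldots c_m\preccurlyeq c_1\ldots c_{m-i}$, and condition (ii) rules out the equality $c_{i+1}\ldots c_m=c_1\ldots c_{m-i}$, upgrading the upper $\preccurlyeq$ to the strict $\prec$ required in Definition \ref{def:fundamental-words}.

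For (i), I will invoke the machinery already in place. Since $\sb\in\A_1$ is fundamental, the sequence $\sb^\infty$ lies in $\vs^*$, so $\sb^\infty=\alpha^*(q^*)$ for some $q^*\in\vb^*$. Applying the increasing homeomorphism $\hat\Phi_\sa^{-1}:\vb^*\to(q_L(\sa),q_R(\sa)]\cap\vb$ produces $q:=\hat\Phi_\sa^{-1}(q^*)\in\vb$ satisfying $\alpha(q)=\Phi_\sa^{-1}(\sb^\infty)=\sc^\infty$, so $\sc^\infty\in\vs$.

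For (ii), I will suppose toward a contradiction that $\sc$ has a period $p<m$. Writing $\sc=\sd_1\ldots\sd_n$ with $\sd_j\in L_\sa$ and $\Phi_\sa(\sd_1\ldots\sd_n)=\sb=b_1\ldots b_n$, the graph $\mathcal G_\sa$ identifies $\sd_j$ with the pair $(b_{j-1},b_j)$ via $(0,0),(0,1),(1,0),(1,1)\mapsto\sa,\sa^+,\overline{\sa^+},\overline\sa$ (setting $b_0:=0$). If $\ell:=|\sa|$ divides $p$, say $p=k\ell$ with $1\le k<n$, then $\sd_{k+j}=\sd_j$ for $1\le j\le n-k$; when $\sa\neq\overline\sa$ this correspondence is injective, forcing $b_{k+j}=b_j$ for $0\le j\le n-k$, so $\sb$ has period $k<n$, contradicting $\sb\in\A_1$. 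The case $\sa=\overline\sa$ occurs only when $\ell=1$, in which case the sub-case $\ell\nmid p$ below is vacuous and the explicit identity $c_j=(M/2)+b_j-b_{j-1}$ together with telescoping reduces the periodicity of $\sc$ to that of $\sb$.

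The main obstacle I anticipate is the sub-case $\ell\nmid p$. Here I will exploit the unique-parsing property: $\sc^\infty\in X_\sa$, and the right-resolving structure of $\mathcal G_\sa$ guarantees a unique decomposition of $\sc^\infty$ into $L_\sa$-blocks. The assumed period gives $\sigma^p(\sc^\infty)=\sc^\infty$, and this uniqueness forces the length-$\ell$ block of $\sc^\infty$ starting at position $p+1$ to equal $\sd_1=\sa^+$. Writing $p=k\ell+r$ with $0<r<\ell$, that block spans the last $\ell-r$ characters of $\sd_{k+1}$ and the first $r$ characters of $\sd_{k+2}$, and applying the same conclusion at every index yields, for each $s\ge 1$, the splicing relation that the concatenation of the last $\ell-r$ characters of $\sd_{k+s}$ with the first $r$ characters of $\sd_{k+s+1}$ equals $\sd_s$. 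The case $\sd_{k+1}=\sa$ at once forces $\sa$ to have period $r<\ell$, contradicting $\sa\in\A_M$; but the other three possibilities for $\sd_{k+1}$ only yield near-periods of $\sa$ with an off-by-one correction in the last position, which are at the single-block level compatible with $\sa$'s fundamentalness. The technical heart of the argument will be to propagate the splicing relation along the block indices, combining it with $\sa^\infty\in\vs$ to eventually force either an exact period of $\sa$ shorter than $\ell$ or an exact period of $\sb$ shorter than $n$---a Fine--Wilf-like rigidity argument tailored to the $L_\sa$ block alphabet.
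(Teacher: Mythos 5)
Your reformulation---$\sc$ is fundamental iff $\sc^\infty\in\vs$ and $\sc$ admits no word period shorter than $m$---is correct, and part (i) is handled cleanly via $\hat\Phi_\sa$. Two issues arise in part (ii), however, and the second is a genuine gap.

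First, the step ``the assumed period gives $\sigma^p(\sc^\infty)=\sc^\infty$'' is not automatic: a word period $p<m$ of $\sc$ only forces $\sigma^p(\sc^\infty)$ to agree with $\sc^\infty$ on the first $m-p$ entries, not on all of them (consider $\sc=aba$: it has word period $2$, but $\sc^\infty=abaaba\ldots$ does not). You must invoke (i) here: if $j>m-p$ were the first index of disagreement, then $\sigma^p(\sc^\infty)\lle\sc^\infty$ would give $c_{(p+j-m)}<c_j$, and setting $i_0:=p+j-m\in[1,p]$ one checks that $\sigma^{m-p}(\sc^\infty)$ agrees with $\sc^\infty$ on $[1,i_0-1]$ and strictly exceeds it at $i_0$, contradicting $\sc^\infty\in\vs$. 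This upgrade from word period to sequence period is quiet but essential, and you should state it.

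Second, and more seriously, the case $\ell\nmid p$ is left as a sketch. You correctly dispatch the sub-case $\sd_{k+1}=\sa$ (it forces a short word period of $\sa$, contradicting $\sa\in\A_M$), but you explicitly note that the other three values of $\sd_{k+1}$ only give ``near-periods with an off-by-one correction,'' and then defer to ``a Fine--Wilf-like rigidity argument'' that is never carried out. This is precisely where the content of the lemma lies, and as written the argument does not conclude. (Classical Fine--Wilf does not even apply directly, since for $p\nmid m$ the length bound $|\sc|\ge p+m-\gcd(p,m)$ fails.) By contrast, the paper's proof sidesteps periods entirely: it verifies the defining inequalities of Definition \ref{def:fundamental-words} directly, splitting on whether $i$ is a multiple of $|\sa|$. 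For $i$ not a multiple of $|\sa|$, the strict inequalities $\overline{a_1\ldots a_{|\sa|}^+}\prec a_{i'+1}\ldots a_{|\sa|}^+\overline{a_1\ldots a_{i'}}\prec a_1\ldots a_{|\sa|}^+$ and $\overline{a_1\ldots a_{|\sa|}^+}\prec a_{i'+1}\ldots a_{|\sa|}a_1\ldots a_{i'}\prec a_1\ldots a_{|\sa|}^+$ ($1\le i'<|\sa|$), which follow at once from $\sa\in\A_M$ and track both the straddled block and the start of the next one, rule out all of the off-phase coincidences in one stroke, including precisely the splicing configurations your sketch leaves open. If you want to complete your own route, the missing step amounts to reproving these displayed inequalities; at that point it is simpler to adopt the paper's direct verification.
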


\begin{proof}
Let $\sa=a_1\dots a_m$, $\sb=b_1\dots b_n$, and put $\sc:=c_1\dots c_{mn}=\sa\circ\sb$. We must verify that 
\begin{equation}
\overline{c_1\ldots c_{mn-i}}\lle c_{i+1}\ldots c_{mn}\prec c_1\ldots c_{mn-i}\quad\forall ~1\le i<mn.
\label{eq:c-fundamental}
\end{equation}
Since $\sa\in\A_M$, by Definition \ref{def:fundamental-words} it follows that
\begin{equation*}
\overline{a_1\ldots a_m^+}\prec a_{i+1}\ldots a_m^+\overline{a_1\ldots a_i}\prec a_1\ldots a_m^+
\end{equation*}
and
\begin{equation*}
\overline{a_1\ldots a_m^+}\prec a_{i+1}\ldots a_m a_1\ldots a_i\prec a_1\ldots a_m^+
\end{equation*}
for all $1\le i<m$. Note that we can write $\sc=\sd_1\dots\sd_n$, where $\sd_i\in \big\{\sa,\sa^+,\overline{\sa},\overline{\sa^+}\big\}$ for each $i$. Since the block $\sa^+$ can only be followed by $\overline{\sa}$ or $\overline{\sa^+}$ and the block $\sa$ can only be followed by $\sa$ or $\sa^+$, it follows from the above inequalities that \eqref{eq:c-fundamental} holds whenever $i$ is not a multiple of $m$. So it remains to verify that
\begin{equation}
\overline{\sd_1\dots \sd_{n-j}}\lle \sd_{j+1}\dots \sd_n\prec \sd_1\dots \sd_{n-j}\quad\forall ~1\le j<n.
\label{eq:d-block-fundamental}
\end{equation}
But this follows from the admissibility of $\sb$, which implies
\[
\overline{b_1\dots b_{n-j}}\lle b_{j+1}\dots b_n\prec b_1\dots b_{n-j} \quad\forall ~1\le j<n.
\]
For instance, since $\sd_1=\sa^+$, the second inequality in \eqref{eq:d-block-fundamental} is obvious if $\sd_{j+1}\prec \sa^+$, so let us assume $\sd_{j+1}=\sa^+$. Then $\sd_{j+1}\dots \sd_n$ is a prefix of a sequence in $X_\sa$, so, since $\Phi_\sa$ is strictly increasing on $X_\sa$,
\begin{equation*}
\sd_{j+1}\dots \sd_n=\Phi_\sa^{-1}(b_{j+1}\dots b_n)\prec \Phi_\sa^{-1}(b_1\dots b_{n-j})
=\sd_1\dots \sd_{n-j}.
\end{equation*}
The first inequality in \eqref{eq:d-block-fundamental} follows similarly, by taking reflections.
\end{proof}

In view of the last lemma, the interval $J_{\sa\circ\sb}=[q_L(\sa\circ\sb),q_R(\sa\circ\sb)]$ is well defined, and it is easy to see that $J_{\sa\circ\sb}\subset J_\sa$.

\begin{lemma} \label{lem:compositions}
For $\sa\in\A_M$ and $\sb\in \A_1$, we have
\[
\Phi_{\sa\circ\sb}=\Phi_{\sb}\circ \Phi_{\sa}|_{X_{\sa\circ\sb}}, \qquad \hat{\Phi}_{\sa\circ\sb}=\hat{\Phi}_{\sb}\circ \hat{\Phi}_{\sa}|_{\alpha^{-1}(X_{\sa\circ\sb})},
\]
and as a result,
\[
\Phi_{\sa\circ\sb}^{-1}=\Phi_{\sa}^{-1}\circ \Phi_{\sb}^{-1}, \qquad \hat{\Phi}_{\sa\circ\sb}^{-1}=\hat{\Phi}_{\sa}^{-1}\circ \hat{\Phi}_{\sb}^{-1}.
\]
\end{lemma}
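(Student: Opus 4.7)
The plan is to prove the key identity $\Phi_{\sa\circ\sb} = \Phi_\sb \circ \Phi_\sa|_{X_{\sa\circ\sb}}$ by a path-tracing argument in the graph $G$, and derive the other three identities as formal consequences. The philosophy is that $\Phi_{\sa\circ\sb}(x)$ and $\Phi_\sb(\Phi_\sa(x))$ both compute the $\mathcal L^*$-labels of a single path in $G$, just read at different ``scales'' of blocking. As a first step, I would verify at the block level that for each $\sc \in L_{\sa\circ\sb} = \{\sa\circ\sb, (\sa\circ\sb)^+, \overline{\sa\circ\sb}, \overline{(\sa\circ\sb)^+}\}$, the word $\sc$ is an admissible concatenation of $L_\sa$-blocks and $\Phi_\sa(\sc)$ equals the corresponding block in $L_\sb$. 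The case $\Phi_\sa(\sa\circ\sb) = \sb$ holds by definition. For $(\sa\circ\sb)^+$, I would use that $\sb \in \A_1$ ends in $0$ (since $b_n < M = 1$), so the terminal $\sa$-block of $\sa\circ\sb$ is either $\sa$ (edge $e_2$) or $\overline{\sa^+}$ (edge $e_1$); an elementary digit calculation then shows that incrementing the last digit turns this block into $\sa^+$ (edge $e_3$) or $\overline\sa$ (edge $e_4$) respectively. In both cases the new edge starts at the same vertex as the one it replaces ($B$ or $A$), so the path remains admissible, and the $\mathcal L^*$-label at the affected position flips from $0$ to $1$, yielding $\Phi_\sa((\sa\circ\sb)^+) = \sb^+$. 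The two reflected cases are symmetric.

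With this block-level correspondence in hand, I would track any $x \in X_{\sa\circ\sb}$ through both decompositions simultaneously. If $x$ arises from a path $e_{i_1} e_{i_2}\dots$ in $G$ under the labeling $\mathcal L_{\sa\circ\sb}$, then expanding each block into $L_\sa$-blocks follows the very same edge sequence, so $\Phi_\sa(x) = \mathcal L_\sb(e_{i_1} e_{i_2}\dots) \in X_\sb$, and applying $\Phi_\sb$ produces $\mathcal L^*(e_{i_1} e_{i_2}\dots) = \Phi_{\sa\circ\sb}(x)$. This establishes the first identity, and incidentally shows $\Phi_\sa(X_{\sa\circ\sb}) = X_\sb$.

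The $\hat\Phi$ identity then follows by a direct unfolding: $\hat\Phi_\sb \circ \hat\Phi_\sa = (\al^*)^{-1}\circ\Phi_\sb\circ\al^*\circ(\al^*)^{-1}\circ\Phi_\sa\circ\al = (\al^*)^{-1}\circ\Phi_{\sa\circ\sb}\circ\al = \hat\Phi_{\sa\circ\sb}$ on $\alpha^{-1}(X_{\sa\circ\sb})$. Because $\Phi_\sa$, $\Phi_\sb$, $\Phi_{\sa\circ\sb}$ are bijections onto their codomains (Proposition 3.5 of \cite{Allaart-Kong-2018b}) and $\hat\Phi_\sa$, $\hat\Phi_\sb$, $\hat\Phi_{\sa\circ\sb}$ are homeomorphisms (Proposition 3.8 there), inverting both composition formulas yields the stated inverse identities.

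The main obstacle is the block-level step, which requires both admissibility checks for the modified paths and digit-by-digit care in handling the ``$+$'' and reflection operations. One also needs to watch the ambiguous case $\sa = \overline\sa$, where $\mathcal L_\sa$ is not right-resolving; however, every block in $L_{\sa\circ\sb}$ begins with either $\sa^+$ or $\overline{\sa^+}$, which remain distinct even when $\sa = \overline\sa$, so the ambiguity of $\Phi_\sa$ at $\sa$ and $\overline\sa$ never intervenes in our computations.
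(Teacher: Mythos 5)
Your proof is correct and follows essentially the same route as the paper: both arguments establish the four block-level correspondences $\Phi_\sa(\sa\circ\sb)=\sb$, $\Phi_\sa((\sa\circ\sb)^+)=\sb^+$, $\Phi_\sa(\overline{\sa\circ\sb})=\overline{\sb}$, $\Phi_\sa(\overline{(\sa\circ\sb)^+})=\overline{\sb^+}$ first, then extend block-by-block to sequences, and finally transfer to $\hat\Phi$ and to the inverse maps exactly as you do. One small technical misstatement: when $\sa=\overline{\sa}$ the labeled graph $\mathcal G_\sa$ \emph{is} still right-resolving (outgoing labels at each vertex remain distinct by \eqref{eq:label-ordering}); the issue is only that $\Phi_\sa$ is left undefined on words beginning with $\sa=\overline{\sa}$ because the initial vertex is ambiguous, and your observation that every block of $L_{\sa\circ\sb}$ begins with $\sa^+$ or $\overline{\sa^+}$ correctly shows this never arises here.
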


\begin{proof}
If $\sc\in X_\sa^{fin}$ is fundamental, then 
\[
\Phi_\sa(\sc^+)=\big(\Phi_\sa(\sc)\big)^+, \qquad \Phi_\sa(\overline{\sc})=\overline{\Phi_\sa(\sc)}, \qquad 
\Phi_\sa\big(\overline{\sc^+}\big)=\overline{\Phi_\sa(\sc^+)},
\]
as can be seen quickly from the definition of $\Phi_\sa$. In particular, setting $\sc=\sa\circ\sb$, we have
\begin{align}
\begin{split}
\Phi_\sa(\sa\circ\sb)&=\Phi_\sa(\Phi_\sa^{-1}(\sb))=\sb,\\
\Phi_\sa\big((\sa\circ\sb)^+\big)&=\big(\Phi_\sa(\sa\circ\sb)\big)^+=\sb^+,\\
\Phi_\sa\big(\overline{\sa\circ\sb}\big)&=\overline{\Phi_\sa(\sa\circ\sb)}=\overline{\sb},\\
\Phi_\sa\big(\overline{(\sa\circ\sb)^+}\big)&=\overline{\Phi_\sa\big((\sa\circ\sb)^+\big)}=\overline{\sb^+}.
\end{split}
\label{eq:four-identities}
\end{align}
Now let $\sd$ be one of the blocks $\sa\circ\sb, (\sa\circ\sb)^+, \overline{\sa\circ\sb}$ or $\overline{(\sa\circ\sb)^+}$. 
Since $\sb\in \A_1$, we have $\sb\neq\overline{\sb}$ so all four labels in $L_\sb:=\{\sb,\sb^+,\overline{\sb},\overline{\sb^+}\}$ are distinct.
By the four identities above, $\Phi_\sa(\sd)\in L_\sb$, and therefore $\Phi_\sb(\Phi_\sa(\sd))$ is well defined.
Note that $\Phi_{\sa\circ\sb}(\sd)=1$ if and only if $\sd=(\sa\circ\sb)^+$ or $\overline{\sa\circ\sb}$. Recalling the definition of $\Phi_\sb$ and the identities \eqref{eq:four-identities}, it follows that $\Phi_{\sa\circ\sb}(\sd)=\Phi_\sb(\Phi_\sa(\sd))$. This implies that $\Phi_{\sa\circ\sb}=\Phi_{\sb}\circ \Phi_{\sa}|_{X_{\sa\circ\sb}}$. The corresponding identity for $\hat{\Phi}_{\sa\circ\sb}$ follows since
\begin{align*}
\hat{\Phi}_\sb\circ\hat{\Phi}_\sa|_{\alpha^{-1}(X_{\sa\circ\sb})} &=\big((\alpha^*)^{-1}\circ\Phi_\sb\circ\alpha^*\big)\circ\big((\alpha^*)^{-1}\circ\Phi_\sa|_{X_{\sa\circ\sb}}\circ\alpha\big)\\
&=(\alpha^*)^{-1}\circ\big(\Phi_\sb\circ \Phi_\sa|_{X_{\sa\circ\sb}}\big)\circ\alpha\\
&=(\alpha^*)^{-1}\circ \Phi_{\sa\circ\sb}\circ\alpha=\hat{\Phi}_{\sa\circ\sb}.
\end{align*}
The identities for the inverse maps are a direct consequence of the ones already proved, noting that the composition $\Phi_{\sa}^{-1}\circ \Phi_{\sb}^{-1}$ is well defined since $\Phi_\sb^{-1}: X^*\to X_\sb\subset X^*$ and $\Phi_\sa^{-1}: X^*\to X_\sa$.
\end{proof}

\begin{proposition} \label{prop:fundamental-semigroup}
The set $\A_1$ with the operation $\circ$ is a non-Abelian semigroup. That is,
\begin{enumerate}[(i)]
\item If $\sa,\sb\in\A_1$, then $\sa\circ\sb\in\A_1$. 
\item For any three words $\sa,\sb,\sc\in\A_1$,
$(\sa\circ \sb)\circ \sc=\sa\circ(\sb\circ \sc)$.
\end{enumerate}
\end{proposition}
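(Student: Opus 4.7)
The plan is to deduce both parts almost immediately from Lemmas \ref{lem:closed-under-composition} and \ref{lem:compositions}, which have already set up exactly the machinery we need; the only remaining work is to verify the non-Abelian assertion by exhibiting a specific pair that fails to commute.

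For part (i), closure of $\A_1$ under $\circ$ is precisely the $M=1$ instance of Lemma \ref{lem:closed-under-composition}: if $\sa,\sb\in\A_1$, then applying the lemma with $M=1$ gives $\sa\circ\sb\in\A_1$. So nothing new has to be proved here.

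For part (ii), I would use the second identity in Lemma \ref{lem:compositions}, namely $\Phi_{\sa\circ\sb}^{-1}=\Phi_\sa^{-1}\circ\Phi_\sb^{-1}$, which is valid for any $\sa,\sb\in\A_1$. By Definition \ref{def:fundamental-word-composition-M},
\[
(\sa\circ\sb)\circ\sc=\Phi_{\sa\circ\sb}^{-1}(\sc)=\big(\Phi_\sa^{-1}\circ\Phi_\sb^{-1}\big)(\sc)=\Phi_\sa^{-1}\big(\Phi_\sb^{-1}(\sc)\big)=\Phi_\sa^{-1}(\sb\circ\sc)=\sa\circ(\sb\circ\sc),
\]
provided we know the inner expression $\sb\circ\sc$ lies in the domain of $\Phi_\sa^{-1}$. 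This is where a small sanity check is needed: by part (i), $\sb\circ\sc\in\A_1$, and every word in $\A_1$ begins with $1$ (since the fundamental condition with $M=1$ forces the first digit to be at least $\overline{a_1}$, ruling out $a_1=0$), hence $\sb\circ\sc$ is a prefix of a sequence in $X^*$, and $\Phi_\sa^{-1}$ is defined on it. This is the only mildly subtle point in the argument.

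Finally, to show that the semigroup is non-Abelian, I simply invoke Example \ref{ex:composition}: with $\sa=10$ and $\sb=110$ in $\A_1$ one has $\sa\circ\sb=110100$ and $\sb\circ\sa=111000$, so $\sa\circ\sb\neq\sb\circ\sa$. I do not expect any genuine obstacle here—the real content has already been packaged into Lemma \ref{lem:compositions}, and the present proposition is essentially a restatement of the associativity of function composition once the translation $\sa\circ\sb=\Phi_\sa^{-1}(\sb)$ is made.
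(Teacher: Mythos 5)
Your proposal matches the paper's proof essentially verbatim: closure via Lemma \ref{lem:closed-under-composition}, associativity via the identity $\Phi_{\sa\circ\sb}^{-1}=\Phi_\sa^{-1}\circ\Phi_\sb^{-1}$ from Lemma \ref{lem:compositions}, and non-commutativity via Example \ref{ex:composition}. The small domain check you add is already handled inside the proof of Lemma \ref{lem:compositions} (where it is noted that $\Phi_\sb^{-1}:X^*\to X_\sb\subset X^*$), but including it does no harm.
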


\begin{proof}
That $\A_1$ is closed under $\circ$ is the content of Lemma \ref{lem:closed-under-composition}.
Associativity follows from Lemma \ref{lem:compositions}, since
\[
(\sa\circ\sb)\circ\sc=\Phi_{\sa\circ\sb}^{-1}(\sc)=\Phi_\sa^{-1}(\Phi_\sb^{-1}(\sc))=\Phi_\sa^{-1}(\sb\circ\sc)=\sa\circ(\sb\circ\sc).
\]
That $(\A_1,\circ)$ is non-Abelian can be seen from Example \ref{ex:composition}.
\end{proof}

\begin{remark}
Note by Lemma \ref{lem:compositions} that the maps $\sa\mapsto \Phi_\sa^{-1}$ and $\sa\mapsto \hat{\Phi}_\sa^{-1}$ are semigroup homomorphisms when $M=1$.
\end{remark}

\begin{lemma} \label{lem:irreducible-implies-maximal}
Let $\sa$ and $\sb$ be distinct fundamental words in $\A_M$ such that $J_\sb\subset J_\sa$. Then there is a word $\sc\in\A_1$ such that $\sb=\sa\circ\sc$.
\end{lemma}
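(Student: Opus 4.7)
The plan is to exhibit $\sc$ as $\Phi_\sa(\sb)$, after verifying that $\sb$ is a legitimate input to $\Phi_\sa$, i.e., that $\sb \in X_\sa^{fin}$ and begins with $\sa^+$. First I would note that every fundamental word is primitive: if $\sa = \se^k$ with $|\se|<|\sa|$ and $k\ge 2$, then taking $i = |\sa|-|\se|$ in Definition~\ref{def:fundamental-words} would force $\se = \se$, contradicting the strict inequality $a_{i+1}\dots a_m\prec a_1\dots a_{m-i}$. Hence $\sa\ne\sb$ implies $\sa^\infty\ne\sb^\infty$, so $q_L(\sa)\ne q_L(\sb)$, and combined with $J_\sb\subset J_\sa$ this forces $q_L(\sa)<q_L(\sb)\le q_R(\sb)\le q_R(\sa)$.

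Next, by \cite[Proposition~3.4]{Allaart-Kong-2018b}, the quasi-greedy expansion $\sb^\infty = \alpha(q_L(\sb))$ lies in $X_\sa$. Therefore $\sb^\infty$ admits a unique decomposition $\sb^\infty = \sd_1\sd_2\sd_3\cdots$ with each $\sd_j\in L_\sa = \{\sa,\sa^+,\overline{\sa},\overline{\sa^+}\}$, and $\sd_1 = \sa^+$ since $e_0$ is the only edge out of the Start vertex of $\mathcal G_\sa$. The critical step is to show that $|\sa|$ divides $|\sb|$, so that $\sb\in X_\sa^{fin}$. Since $\sb$ is primitive, $|\sb|$ is the minimal period of $\sb^\infty$; shifting $\sb^\infty$ by $|\sb|$ positions returns the same sequence, and uniqueness of the $|\sa|$-block decomposition then requires position $|\sb|+1$ to coincide with one of the canonical block boundaries $\{1,|\sa|+1,2|\sa|+1,\ldots\}$. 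Otherwise, writing $|\sb| = q|\sa|+r$ with $0<r<|\sa|$, the copy of $\sa^+$ that starts $\sb$ again at position $|\sb|+1$ would straddle $\sd_{q+1}$ and $\sd_{q+2}$, and a short case analysis over the four possibilities for $\sd_{q+1}\in L_\sa$ produces an internal repetition of $\sa$ incompatible with the strict inequality $a_{r+1}\dots a_m\prec a_1\dots a_{m-r}$ from Definition~\ref{def:fundamental-words} (the unbordered property of $\sa$).

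Having established $\sb\in X_\sa^{fin}$, set $\sc := \Phi_\sa(\sb)$. By Lemma~\ref{lem:again-fundamental}, $\sc$ is fundamental, and it is a binary word starting with $1$ because $\Phi_\sa$ maps $\sd_1=\sa^+$ to the symbol~$1$. The degenerate case $|\sc|=1$ would force $\sb=\sa^+$ and thereby
\[
\alpha(q_R(\sb)) = (\sa^+)^+(\overline{\sa^+})^\infty\succ \sa^+(\overline{\sa})^\infty = \alpha(q_R(\sa)),
\]
contradicting $q_R(\sb)\le q_R(\sa)$; hence $|\sc|\ge 2$ and $\sc\in\A_1$. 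Finally,
\[
\sa\circ\sc = \Phi_\sa^{-1}(\sc) = \Phi_\sa^{-1}(\Phi_\sa(\sb)) = \sb,
\]
as desired. The main obstacle is the divisibility step $|\sa|\mid|\sb|$: it is here that the unbordered property encoded in the strict half of Definition~\ref{def:fundamental-words}, combined with the right-resolving structure of $\mathcal G_\sa$, is indispensable to prevent a ``fractional'' interleaving of the canonical blocks with the $\sb$-period.
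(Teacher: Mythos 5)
Your proposal is correct and follows essentially the same route as the paper: set $\sc=\Phi_\sa(\sb)$, invoke Lemma~\ref{lem:again-fundamental}, and apply $\Phi_\sa^{-1}$. The difference is that you reconstruct from first principles (primitivity of fundamental words, Proposition~3.4 of \cite{Allaart-Kong-2018b}, and periodicity together with uniqueness of the $|\sa|$-block decomposition) the key fact $\sb\in X_\sa^{fin}$, whereas the paper offloads exactly this — including the divisibility $|\sa|\mid|\sb|$ whose case analysis you sketch — to a direct citation of \cite[Lemma~3.2]{Allaart-Kong-2018b}.
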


\begin{proof}
By \cite[Lemma 3.2]{Allaart-Kong-2018b}, $\sb$ is a prefix of a sequence in $X_\sa$, so we can take $\sc=\Phi_\sa(\sb)$. Then $\sc\in\A_1$ by Lemma \ref{lem:again-fundamental}, and $\sa\circ\sc=\Phi_\sa^{-1}(\sc)=\sb$.
\end{proof}

\begin{definition}
The {\em unit lift} is the word $\su=\su_M$ defined as follows:
\[
\su=\su_M=\begin{cases}
k & \mbox{if $M=2k$},\\
(k+1)k & \mbox{if $M=2k+1$}.
\end{cases}
\]
\end{definition}

Observe that $\su\in\A_M$ and $J_\su=[q_G,q_T]$.

\begin{definition} \label{def:irreducible-word}
Let $\sa\in\A_M$.
\begin{enumerate}[(i)]
\item Say $\sa$ is {\em irreducible} if there do not exist $\sc\in\A_M$ and $\sd\in\A_1$ such that $\sa=\sc\circ\sd$. 
\item Say $\sa$ is {\em $n$-irreducible} if $\sa=\su\circ (10)^{\circ (n-1)}\circ\sb$ for some irreducible word $\sb\in\A_1$.
\item Say $\sa$ is {\em $*$-irreducible} if $\sa$ is $n$-irreducible for some $n\in\N$.
\end{enumerate}
Here $(10)^{\circ k}$ means the $k$-fold composition of $10$ with itself.
\end{definition}

\begin{definition} \label{def:irreducible-interval}
Call the interval $J_\sa$ {\em irreducible} ({\em $n$-irreducible, $*$-irreducible}) if $\sa$ is irreducible ($n$-irreducible, $*$-irreducible).
\end{definition}

\begin{proposition}
Let $\sa\in\A_M$. Then $\sa$ has a unique decomposition into irreducible words. That is, there is a unique $k\in\N$ and a unique $k$-tuple $(\sc_1,\dots,\sc_k)$ with $\sc_1\in\A_M$, $\sc_2,\dots,\sc_k\in\A_1$, and each $\sc_i$ irreducible, such that
$\sa=\sc_1\circ \sc_2\circ\dots\circ \sc_k$.
\end{proposition}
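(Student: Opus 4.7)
The plan is to prove existence and uniqueness separately, each by strong induction on the length $|\sa|$.

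For existence, I would proceed as follows. If $\sa$ is irreducible, take $k=1$ and $\sc_1 = \sa$. Otherwise, by Definition \ref{def:irreducible-word}(i), write $\sa = \sb \circ \sd$ for some $\sb \in \A_M$ and $\sd \in \A_1$. Since every word in $\A_1$ has length at least $2$, both $|\sb|$ and $|\sd|$ are strictly less than $|\sa|$. By the induction hypothesis, $\sb$ decomposes as $\sb = \sb_1 \circ \cdots \circ \sb_j$ with $\sb_1 \in \A_M$ and $\sb_2, \dots, \sb_j \in \A_1$ all irreducible, and $\sd$ decomposes as $\sd = \sd_1 \circ \cdots \circ \sd_r$ with every $\sd_i \in \A_1$ irreducible (this is the proposition in the case $M=1$ applied to $\sd$, which is available by induction since $|\sd| < |\sa|$). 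Associativity (Proposition \ref{prop:fundamental-semigroup}(ii)) would then combine these into a decomposition $\sa = \sb_1 \circ \cdots \circ \sb_j \circ \sd_1 \circ \cdots \circ \sd_r$ of the required form.

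For uniqueness, I would start from two supposed decompositions $\sa = \sc_1 \circ \cdots \circ \sc_k = \sc'_1 \circ \cdots \circ \sc'_l$. First, if $k=1$ then $\sa = \sc_1$ is irreducible, which forces $l = 1$ (otherwise $\sc'_1 \circ (\sc'_2 \circ \cdots \circ \sc'_l)$ would violate irreducibility). So I would reduce to $k, l \geq 2$ and set $\sd := \sc_2 \circ \cdots \circ \sc_k \in \A_1$ and $\sd' := \sc'_2 \circ \cdots \circ \sc'_l \in \A_1$, giving $\sa = \sc_1 \circ \sd = \sc'_1 \circ \sd'$ with $\sc_1, \sc'_1$ irreducible in $\A_M$. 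The crucial step is to show $\sc_1 = \sc'_1$: since $J_\sa \subset J_{\sc_1} \cap J_{\sc'_1}$, these two fundamental intervals share a nondegenerate sub-interval, so by the nesting property of fundamental intervals (any two are either disjoint or one contains the other) we may assume $J_{\sc_1} \subseteq J_{\sc'_1}$. If $\sc_1 \neq \sc'_1$, Lemma \ref{lem:irreducible-implies-maximal} would produce $\sb \in \A_1$ with $\sc_1 = \sc'_1 \circ \sb$, contradicting the irreducibility of $\sc_1$. Hence $\sc_1 = \sc'_1$, and bijectivity of $\Phi_{\sc_1}^{-1}$ together with $\Phi_{\sc_1}^{-1}(\sd) = \sa = \Phi_{\sc_1}^{-1}(\sd')$ would force $\sd = \sd'$. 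The induction hypothesis applied to $\sd \in \A_1$ (of length $< |\sa|$) would then yield $k = l$ and $\sc_i = \sc'_i$ for all $i \geq 2$.

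The hard part will be the nesting property of fundamental intervals invoked above, namely that for any $\sb, \sb' \in \A_M$ the intervals $J_\sb$ and $J_{\sb'}$ are either disjoint or nested. Via Lemma \ref{lem:quasi-greedy expansion-alpha-q}, this reduces to the lexicographic assertion that whenever $\sb^\f \lle (\sb')^\f \lle \sb^+ \overline{\sb}^\f$, one also has $(\sb')^+ \overline{\sb'}^\f \lle \sb^+ \overline{\sb}^\f$. I would establish this by a careful case analysis on how the prefixes of $\sb'$ align with the periodic word $\sb^\f$, exploiting the defining inequalities of fundamental words (Definition \ref{def:fundamental-words}); this is similar in spirit to arguments in the companion paper \cite{Allaart-Kong-2018b}. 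Once this nesting fact is in hand, the induction above would close cleanly.
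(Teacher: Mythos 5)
Your approach matches the paper's. The paper gives essentially the same argument, just very tersely: existence is asserted to follow by iterating Definition \ref{def:irreducible-word}(i), and uniqueness from $J_{\sa\circ\sb}\subset J_\sa$, induction, and the observation (via Lemma \ref{lem:irreducible-implies-maximal}) that distinct irreducible intervals are disjoint. You correctly isolate the genuine technical input behind that last observation, namely that fundamental intervals are nested or disjoint; this is what makes Lemma \ref{lem:irreducible-implies-maximal} (stated only for the nested case) sufficient, and it is effectively supplied by the companion-paper results the text already leans on for Lemma \ref{lem:irreducible-implies-maximal} itself.

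One small slip in your induction bookkeeping: from $\sa=\sb\circ\sd$ with $\sb\in\A_M$, $\sd\in\A_1$, you assert \emph{both} $|\sb|<|\sa|$ and $|\sd|<|\sa|$, citing only that words in $\A_1$ have length at least $2$. Since $|\sa|=|\sb|\,|\sd|$, that gives $|\sb|\leq |\sa|/2<|\sa|$, but $|\sd|<|\sa|$ requires $|\sb|\geq 2$, and for $M\geq 2$ there are fundamental words of length $1$ (Definition \ref{def:fundamental-words}, e.g.\ $\su_M$ for $M$ even), in which case $|\sd|=|\sa|$ and your induction hypothesis does not apply to $\sd$. The fix is routine: first prove the $M=1$ case by induction on length (there both factors are genuinely shorter, since $\A_1$ has no length-$1$ words), and then for general $M$ induct on $|\sa|$, handling the outer factor by that induction and the inner factor $\sd\in\A_1$ by the already-established $M=1$ result. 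The same caveat applies in your uniqueness half, where $\sd=\sc_2\circ\cdots\circ\sc_k$ need not be shorter than $\sa$ when $|\sc_1|=1$. With that patch, and granting the nesting property you flag, your argument is sound and parallels the paper's.
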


\begin{proof}
The existence of such a decomposition follows directly from Definition \ref{def:irreducible-word}(i). Uniqueness follows from the fact that $J_{\sa\circ\sb}\subset J_\sa$, plus induction, since Lemma \ref{lem:irreducible-implies-maximal} implies that any two distinct irreducible intervals are disjoint.
\end{proof}

\begin{lemma} \label{lem:irreducible-characterization}
Let $q\in[q_G,M+1]$. The sequence $\alpha(q)$ is irreducible if and only if $q\not\in(q_L,q_R]$ for any fundamental interval $[q_L,q_R]$.
\end{lemma}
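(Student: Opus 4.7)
The plan is to establish both directions by contrapositive, bridging Definition \ref{def:irreducible-sequence} (the Alcaraz Barrera et al.~notion of irreducibility of a sequence) with the geometric condition of $q$ lying outside every fundamental interval. Throughout, I will use the translation $\overline{c_1\dots c_j}^+=\overline{\sb}$ when $\sb:=c_1\dots c_j^-$, which rewrites the conclusion in Definition \ref{def:irreducible-sequence} as the clean inequality $\sb^+\overline{\sb}^\f\prec (c_i)$.

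For the ``only if'' direction, I will assume $q\in(q_L(\sa),q_R(\sa)]$ for some fundamental $\sa=a_1\dots a_m$, so $\sa^\f\prec\alpha(q)\lle\sa^+\overline{\sa}^\f$, and show this makes Definition \ref{def:irreducible-sequence} fail at $j=m$. The key intermediate step is that $\alpha(q)$ must begin with the block $\sa^+$. Comparing prefixes of length $m-1$ against both bounding sequences pins $\alpha(q)_1\dots\alpha(q)_{m-1}=a_1\dots a_{m-1}$, and comparing position $m$ forces $\alpha(q)_m\in\{a_m,a_m+1\}$. The value $a_m$ is ruled out by iteration: if $\alpha(q)$ started with $\sa$, then for $\beta:=\sigma^m\alpha(q)$ the quasi-greedy inequality $\beta\lle\alpha(q)$ together with $\beta\succ\sa^\f$ would force $\beta$ again to start with $\sa$, and repeating indefinitely would yield $\alpha(q)=\sa^\f$, contradicting $\alpha(q)\succ\sa^\f$. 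Once $\alpha(q)$ begins with $\sa^+$, applying Definition \ref{def:irreducible-sequence} with $j=m$ gives the premise (since $\sa^\f\in\vs$ by fundamentality) but fails the strict inequality, because $\alpha(q)\lle\sa^+\overline{\sa}^\f$.

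For the ``if'' direction I assume $\alpha(q)=(c_i)$ is not irreducible and pick the witnessing $j$; so $\sb^\f\in\vs$ with $\sb:=c_1\dots c_j^-$, and $(c_i)\lle\sb^+\overline{\sb}^\f$. Let $p$ be the minimal period of $\sb^\f$ and set $\sa:=c_1\dots c_p$, so $\sb=\sa^{j/p}$. The heart of the argument is the claim that $\sa$ is fundamental. The non-strict inequalities $\overline{a_1\dots a_{p-i}}\lle a_{i+1}\dots a_p\lle a_1\dots a_{p-i}$ read off directly from $\overline{\sa^\f}\lle\sigma^i\sa^\f\lle\sa^\f$ by taking the first $p-i$ symbols on each side. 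Suppose the right-hand inequality were not strict, so $a_{i+1}\dots a_p=a_1\dots a_{p-i}$. Then $\sigma^i\sa^\f$ and $\sa^\f$ agree on their first $p-i$ symbols, and the tail comparison past position $p-i$ turns $\sigma^i\sa^\f\lle\sa^\f$ into $\sa^\f\lle\sigma^{p-i}\sa^\f$; combined with the $\vs$-inequality $\sigma^{p-i}\sa^\f\lle\sa^\f$ this yields $\sigma^{p-i}\sa^\f=\sa^\f$, contradicting minimality of $p$. The edge case $p=1$ arises only when $M\geq 2$ with $c_1<M$, and is covered directly by the second clause of Definition \ref{def:fundamental-words}. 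It remains to verify $\sa^\f\prec(c_i)\lle\sa^+\overline{\sa}^\f$: the left inequality holds because $\sa^\f$ and $(c_i)$ agree on positions $1,\dots,j-1$ and differ at position $j$ with $(\sa^\f)_j=c_j-1<c_j$, while the right inequality follows from the factorisation $\sb^+\overline{\sb}^\f=\sa^{j/p-1}\sa^+\overline{\sa}^\f\lle\sa^+\overline{\sa}^\f$. Hence $q\in(q_L(\sa),q_R(\sa)]$.

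The main obstacle is showing that the minimal-period block $\sa=c_1\dots c_p$ is fundamental, since $\sa^\f\in\vs$ alone yields only the non-strict inequality; upgrading to the strict inequality required by Definition \ref{def:fundamental-words} is precisely where the minimality of $p$ is used, via the shift-reduction $\sigma^i\to\sigma^{p-i}$ described above. The remaining verifications are straightforward bookkeeping with the $^+$, $\overline{\,\cdot\,}$, and concatenation operations.
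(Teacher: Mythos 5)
Your proof follows essentially the same route as the paper's (a lexicographic sandwich argument in each direction), and in two places you are actually more careful than the paper: in the ``only if'' direction you spell out, via the quasi-greedy inequality $\sigma^m\alpha(q)\lle\alpha(q)$, why $\alpha(q)$ must begin with $\sa^+$ rather than $\sa$ (the paper asserts this as if immediate), and in the ``if'' direction you explicitly replace $c_1\dots c_j^-$ by its minimal-period block $\sa$ before forming the fundamental interval, which is needed since $c_1\dots c_j^-$ itself need not be fundamental (the paper writes down $[q_L,q_R]$ with $\alpha(q_L)=(a_1\dots a_j^-)^\f$ and calls it a fundamental interval without this reduction). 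Those refinements are correct.

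However, there is a genuine gap in your ``if'' direction. You begin with ``assume $\alpha(q)=(c_i)$ is not irreducible and pick the witnessing $j$.'' This tacitly presumes $\alpha(q)\in\vs$. By Definition \ref{def:irreducible-sequence}, irreducibility is only defined for sequences in $\vs$, so ``$\alpha(q)$ is not irreducible'' splits into two cases: either $\alpha(q)\in\vs$ and some $j$ witnesses the failure (which you handle), or $\alpha(q)\notin\vs$ (which you do not). The second case is exactly why the paper inserts the paragraph ``We first verify that $\alpha(q)\in\vs$'': there it uses the fundamental interval $J_{\su}=[q_G,q_T]$ to reduce to $q=q_G$ or $q>q_T$, and in the latter case invokes a nontrivial structure result of Kong and Li \cite{Kong_Li_2015} (each component of $(q_{KL},M+1]\setminus\overline{\ub}$ lies in the interior of some fundamental interval, and $\overline{\ub}\subset\vb$) to conclude $\alpha(q)\in\vs$. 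Without an argument covering $\alpha(q)\notin\vs$, your contrapositive is incomplete and the Lemma is not fully proved; this is not a bookkeeping omission but requires an additional input of the kind the paper cites.
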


\begin{proof}
Write $\alpha(q)=a_1 a_2\dots$.
Assume first that $\alpha(q)$ is irreducible, and suppose $q\in(q_L(\sb),q_R(\sb)]$ for some fundamental word $\sb=b_1\dots b_j$. Then
\begin{equation}
(b_1\dots b_j)^\f=\alpha(q_L(\sb)) \prec \alpha(q)\lle \alpha(q_R(\sb))=b_1\dots b_j^+(\overline{b_1\dots b_j})^\f.
\label{eq:b-interval}
\end{equation}
So $\alpha(q)$ begins with $b_1\dots b_j^+$, i.e. $a_1\dots a_j=b_1\dots b_j^+$. But then $a_1\dots a_j^-=b_1\dots b_j$ and so $(a_1\dots a_j^-)^\f\in\vs$. In view of \eqref{eq:b-interval}, this contradicts \eqref{eq:irreducible-condition}.

Conversely, suppose $q\not\in(q_L,q_R]$ for any fundamental interval $[q_L,q_R]$. We first verify that $\alpha(q)\in\vs$. Either $q=q_G$ and $\alpha(q)\in\vs$ by \eqref{eq:q_G}; or else $q>q_T>q_{KL}$. In the second case, $q\in\overline{\ub}$, because each connected component of $(q_{KL},M+1]\backslash \overline{\ub}$ is contained in the interior of a fundamental interval (see \cite{Kong_Li_2015}). But $\overline{\ub}\subset\vb$, so $q\in\vb$ and hence $\alpha(q)\in\vs$.

Now let $j\in\N$ be such that $(a_1\dots a_j^-)^\f \in \vs$. Consider the fundamental interval $[q_L,q_R]$ given by
\[
\alpha(q_L)=(a_1\dots a_j^-)^\f, \qquad \alpha(q_R)=a_1\dots a_j(\overline{a_1\dots a_j}^+)^\f.
\]
Note that $q>q_L$ since $\alpha(q)\succ \alpha(q_L)$, so by our hypothesis, $q>q_R$ and so $\alpha(q)\succ \alpha(q_R)$. But this is equivalent to the consequent of \eqref{eq:irreducible-condition}. Hence, $\alpha(q)$ is irreducible.
\end{proof}

\begin{proposition} \label{lem:irreducible-connection}
Let $\sa\in\A_M$. Then $\sa^\f$ is irreducible (in the sense of Definition \ref{def:irreducible-sequence}) if and only if $\sa$ is irreducible (in the sense of Definition \ref{def:irreducible-word}).
\end{proposition}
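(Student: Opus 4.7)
The plan is to connect the two irreducibility notions via Lemma \ref{lem:irreducible-characterization}. Applied to $q = q_L(\sa)$ (with $q_L(\sa) \ge q_G$ since $\sa^\f \in \vs$), that lemma says $\sa^\f$ is irreducible in the sense of Definition \ref{def:irreducible-sequence} if and only if $q_L(\sa) \notin (q_L(\sb), q_R(\sb)]$ for every fundamental $\sb$. I will establish each direction by recasting this condition as a composition $\sa = \sb \circ \sd$.

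For the $(\Leftarrow)$ direction I argue contrapositively: suppose $\sa = \sc \circ \sd$ with $\sc \in \A_M$ and $\sd \in \A_1$. Applying Definition \ref{def:fundamental-words} to $\sd$ with $i = |\sd| - 1$ forces $d_1 = 1$, so $\sa = \Phi_\sc^{-1}(\sd)$ begins with the block $\sc^+$; hence $\sa^\f \succ \sc^\f$ and $q_L(\sa) > q_L(\sc)$. Combining this with $J_\sa = J_{\sc \circ \sd} \subset J_\sc$ (noted just after Lemma \ref{lem:closed-under-composition}) places $q_L(\sa) \in (q_L(\sc), q_R(\sc)]$, and Lemma \ref{lem:irreducible-characterization} then shows $\sa^\f$ is not irreducible in the sense of Definition \ref{def:irreducible-sequence}.

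For the $(\Rightarrow)$ direction, assume $\sa^\f$ is not irreducible in the sense of Definition \ref{def:irreducible-sequence}. Lemma \ref{lem:irreducible-characterization} furnishes a fundamental word $\sb \ne \sa$ with $q_L(\sa) \in (q_L(\sb), q_R(\sb)]$. Since $q_L(\sa) \in \vb$, the property recorded between Definition \ref{def:fundamental-word-composition-M} and Lemma \ref{lem:again-fundamental} (from \cite[Proposition 3.4]{Allaart-Kong-2018b}) yields $\sa^\f = \alpha(q_L(\sa)) \in X_\sb$. Right-resolvingness of $\mathcal{G}_\sb$ makes the decomposition of $\sa^\f$ into blocks from $L_\sb$ unique; comparing this decomposition to its shift by $|\sa|$, together with the fact that fundamental words are primitive (a direct check of Definition \ref{def:fundamental-words} rules out $\sa$ being a proper power), yields $|\sb| \mid |\sa|$, so $\sa \in X_\sb^{fin}$. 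Setting $\sd := \Phi_\sb(\sa)$, Lemma \ref{lem:again-fundamental} gives $\sd \in \A_1$, and by the definition of composition $\sa = \Phi_\sb^{-1}(\sd) = \sb \circ \sd$. Thus $\sa$ is not irreducible in the sense of Definition \ref{def:irreducible-word}(i).

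The main obstacle is the divisibility step $|\sb| \mid |\sa|$ in the forward direction: one must show that the unique path in $\mathcal{G}_\sb$ realizing $\sa^\f$ carves $\sa$ into full length-$|\sb|$ blocks. This is a small technical point relying on the uniqueness of block decompositions and the primitivity of $\sa$; all other steps reduce to direct appeals to Lemma \ref{lem:irreducible-characterization}, the cited $X_\sb$-characterization, and the defining properties of $\Phi_\sb$.
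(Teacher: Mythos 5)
Your overall route is the same as the paper's: translate both irreducibility notions into the condition of Lemma~\ref{lem:irreducible-characterization} on whether $q_L(\sa)$ lies in some half-open fundamental interval, and your $(\Leftarrow)$ direction is essentially the paper's first paragraph in contrapositive form (the observation $\sa=\sc\circ\sd\Rightarrow\sa$ begins with $\sc^+\Rightarrow q_L(\sa)\in(q_L(\sc),q_R(\sc)]$ is exactly how one sees $J_{\sa\circ\sd}\subsetneq J_\sc$). The paper's $(\Rightarrow)$ direction, however, simply routes through Lemma~\ref{lem:irreducible-implies-maximal}, whereas you try to re-derive that lemma's content directly, and there your argument has a gap. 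The divisibility $|\sb|\mid|\sa|$ does not follow from right-resolvingness of $\mathcal G_\sb$ together with primitivity of $\sa$. Primitivity of $\sa$ only rules out $\sa$ being a proper power; it says nothing about whether the unique $L_\sb$-factorization of $\sa^\f$ aligns with the period $|\sa|$. What is actually needed is an overlap-freeness fact: no length-$|\sb|$ window straddling a block boundary in a legal $L_\sb$-bigram can equal $\sb^+$ or $\overline{\sb^+}$. This is precisely what the two displayed inequality chains at the start of the proof of Lemma~\ref{lem:closed-under-composition} assert, and it forces the occurrence of $\sb^+$ at position $|\sa|+1$ (which periodicity gives you) to be a genuine block boundary, yielding $|\sb|\mid|\sa|$. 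The paper packages this as \cite[Lemma~3.2]{Allaart-Kong-2018b}, invoked via Lemma~\ref{lem:irreducible-implies-maximal}. If you simply replace your ad hoc divisibility step with a citation of Lemma~\ref{lem:irreducible-implies-maximal} (or of the Allaart--Kong lemma), your proof becomes correct and coincides with the paper's.
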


\begin{proof}
Let $q$ be the base with $\alpha(q)=\sa^\f$; then $q$ is the left endpoint of $J_\sa$.
Suppose first that $\sa$ is irreducible. Then by Lemma \ref{lem:irreducible-implies-maximal}, $J_\sa$ is a maximal fundamental interval, so there is no fundamental interval $[q_L,q_R]$ such that $q\in (q_L,q_R]$. Hence, by Lemma \ref{lem:irreducible-characterization}, $\sa^\f=\alpha(q)$ is irreducible.

Conversely, suppose $\sa^\f$ is irreducible. By Lemma \ref{lem:irreducible-characterization}, there is no fundamental interval $[q_L,q_R]$ such that $q\in (q_L,q_R]$. Hence $J_\sa$ is a maximal fundamental interval. This implies $\sa$ is irreducible.
\end{proof}

\begin{remark}
Proposition \ref{lem:irreducible-connection} provides a natural algebraic interpretation of the notion of ``irreducible" from Definition \ref{def:irreducible-sequence}: $\sa^\f$ is irreducible if and only if $\sa$ can not be written as the composition of two or more fundamental words.
\end{remark}

\section{Irreducible intervals and entropy plateaus} \label{sec:entropy}

Now if we set $\sa_n:=\su_M\circ (10)^{\circ (n-1)}$ for $n\in\N$, then $\sa_n\in\A_M$ and the interval $J_{\sa_n}=[q_L(\sa_n),q_R(\sa_n)]$ has right endpoint $q_R(\sa_n)=q_n'$. Note that $q_n'\searrow q_{KL}$, and $\sa_n\to (\lambda_i)$. Also set $q_0':=M+1$, and define the intervals
\[
I_n:=(q_{n+1}',q_n'], \qquad n=0,1,2,\dots,
\]
so $\bigcup_{n=0}^\f I_n=(q_{KL},M+1]$, with the union disjoint. When $M=1$ we write $I_n^*:=I_n$.

Before stating the first lemma of this section, we recall that $J_\su=[q_G,q_T]$, that
\[
q_G<q_{KL}<\dots<q_{n+1}'<q_n'<\dots<q_2'<q_1'=q_T,
\]
and all these bases belong to $\vb$.

\begin{lemma} \label{lem:Phi-hat-properties}
\begin{enumerate}[(i)]
\item $\hat{\Phi}_{\su}$ is increasing and maps $[q_G,q_T]\cap\vb$ bijectively onto $\vb^*$. 
\item $\hat{\Phi}_{\su}(q_{KL})=q_{KL}^*$.
\item $\hat{\Phi}_{\su}(I_{n+1}\cap\vb)=I_n^*\cap\vb^*$ for all $n\geq 0$.
\end{enumerate}
\end{lemma}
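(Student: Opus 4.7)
For part (i), I would appeal directly to Proposition 3.8 of \cite{Allaart-Kong-2018b} applied with $\sa=\su$; the only thing to verify is the identification $J_\su=[q_G,q_T]$, which is immediate since $\alpha(q_L(\su))=\su^\infty$ matches formula \eqref{eq:q_G} (forcing $q_L(\su)=q_G$) and $q_R(\su)=q_1'=q_T$ by definition.

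The main content is (iii), which reduces to the single identity
\[
\hat{\Phi}_\su(q_{n+1}')=q_n'^*\qquad\text{for every }n\ge 0.
\]
For $n=0$ this is the direct computation $\hat{\Phi}_\su(q_T)=2$: since $\alpha(q_T)=\su^+(\overline{\su})^\infty$ traces the path $e_0 e_4 e_4\dots$ in $\mathcal{G}_\su$, its $\mathcal{L}^*$-labels read $1^\infty=\alpha^*(2)$. For $n\ge 1$, set $\sa_n^*:=(10)^{\circ n}\in\A_1$; since $\su_1=10$ this $\sa_n^*$ is precisely the analogue of $\sa_n$ for the alphabet $\{0,1\}$, so $q_R(\sa_n^*)=q_n'^*$. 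By definition $\sa_{n+1}=\su\circ\sa_n^*$, so Lemma \ref{lem:compositions} gives
\[
\hat{\Phi}_{\sa_{n+1}}=\hat{\Phi}_{\sa_n^*}\circ\hat{\Phi}_\su
\]
on the common domain. The same path-tracing argument as for $n=0$ now shows $\hat{\Phi}_{\sa_{n+1}}(q_{n+1}')=2$ and $\hat{\Phi}_{\sa_n^*}(q_n'^*)=2$. Since $\hat{\Phi}_{\sa_n^*}$ is bijective (part (i) applied with $M=1$), the composition identity forces $\hat{\Phi}_\su(q_{n+1}')=q_n'^*$.

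Combined with the increasing bijectivity from (i), this identity implies that $\hat{\Phi}_\su$ maps each half-open interval $I_{n+1}\cap\vb=(q_{n+2}',q_{n+1}']\cap\vb$ bijectively onto $(q_{n+1}'^*,q_n'^*]\cap\vb^*=I_n^*\cap\vb^*$, proving (iii). Part (ii) then drops out by continuity: since $q_n'\searrow q_{KL}$ and their images $q_{n-1}'^*\searrow q_{KL}^*$, the continuity of $\hat{\Phi}_\su$ (part (i)) yields $\hat{\Phi}_\su(q_{KL})=q_{KL}^*$.

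The main care needed is in verifying that Lemma \ref{lem:compositions} applies at the point $q_{n+1}'$, which requires $\alpha(q_{n+1}')\in X_{\sa_{n+1}}$; this is immediate from the form $\sa_{n+1}^+(\overline{\sa_{n+1}})^\infty$, which begins with the required block $\sa_{n+1}^+$. Everything else is straightforward use of the semigroup structure developed in Section \ref{sec:composition}.
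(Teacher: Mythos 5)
Your proof is correct, but it takes a genuinely different route from the paper's. The paper proves parts~(ii) and (iii) by a \emph{direct} block-by-block computation: it shows inductively that $\Phi_\su(\lambda_1\dots\lambda_{2^k}) = \tau_1\dots\tau_{2^k}$ (or $\tau_1\dots\tau_{2^{k-1}}$ when $M$ is odd), exploiting the Thue--Morse self-similarity $\lambda_{2^k+1}\dots\lambda_{2^{k+1}} = \overline{\lambda_1\dots\lambda_{2^k}}^+$ together with the three identities $\Phi_\sa(\sc^+) = \Phi_\sa(\sc)^+$, $\Phi_\sa(\overline{\sc}) = \overline{\Phi_\sa(\sc)}$, $\Phi_\sa(\overline{\sc^+}) = \overline{\Phi_\sa(\sc^+)}$; part~(iii) then follows ``in the same way.'' You instead lean on the semigroup machinery of Section~\ref{sec:composition}: you factor $\sa_{n+1} = \su\circ\sa_n^*$, apply the composition identity $\hat{\Phi}_{\sa_{n+1}} = \hat{\Phi}_{\sa_n^*}\circ\hat{\Phi}_\su$ from Lemma~\ref{lem:compositions}, and use the universal identity $\hat{\Phi}_\sa(q_R(\sa)) = 2$ (the path $e_0 e_4 e_4\dots$) to deduce $\hat{\Phi}_\su(q_{n+1}') = q_n'^*$ by injectivity of $\hat{\Phi}_{\sa_n^*}$. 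You then get (ii) \emph{from} (iii) by monotone continuity, reversing the paper's order of presentation. Your approach is more conceptual and avoids explicit manipulation of the Thue--Morse digits, but it pushes the computational content into the assertion $q_R(\sa_n) = q_n'$ (equivalently $\sa_n^+ = \lambda_1\dots\lambda_{2^{n-1}}$ or $\lambda_1\dots\lambda_{2^n}$), which the paper states just before the lemma without proof; a careful write-up would note that that identity itself needs the same Thue--Morse induction, via $\sa_{n+1} = \sa_n^+\overline{\sa_n^+}$. One small imprecision: bijectivity of $\hat{\Phi}_{\sa_n^*}$ for general $\sa_n^*$ comes from Proposition~3.8 of \cite{Allaart-Kong-2018b} rather than from ``part~(i) applied with $M=1$,'' since part~(i) concerns only $\hat{\Phi}_\su$.
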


\begin{proof}
The proof of (i) is the same as the proof of \cite[Proposition 3.5]{Allaart-Kong-2018b}. For (ii) we must verify that $\Phi_{\su}((\lambda_i))=(\tau_i)$. Consider first the case when $M$ is even. We prove (ii) for $M=2$; the proof for other even values of $M$ is the same modulo a renaming of the digits. We will show inductively that
\begin{equation}
\Phi_\su(\lambda_1\dots\lambda_{2^k})=\tau_1\dots\tau_{2^k}, \qquad\mbox{for all $k\geq 1$},
\label{eq:finite-1-to-1-map}
\end{equation}
where $\su=1$ and $\lambda_i:=\lambda_i(2)$. For $k=1$, we get $\Phi_\su(\lambda_1\lambda_2)=\Phi_\su(21)=\Phi_\su(\su^+\overline{\su})=11=\tau_1\tau_2$. Assuming \eqref{eq:finite-1-to-1-map} holds for some arbitrary $k\in\N$, we use the relationship
\[
\lambda_{2^k+1}\dots\lambda_{2^{k+1}}=\overline{\lambda_1\dots\lambda_{2^k}}^+
\]
to obtain
\begin{align*}
\Phi_\su(\lambda_1\dots\lambda_{2^{k+1}})&=\Phi_\su(\lambda_1\dots\lambda_{2^k})\Phi_\su(\lambda_{2^k+1}\dots\lambda_{2^{k+1}})\\
&=\Phi_\su(\lambda_1\dots\lambda_{2^k})\Phi_\su(\overline{\lambda_1\dots\lambda_{2^k}}^+)\\
&=\Phi_\su(\lambda_1\dots\lambda_{2^k})\overline{\Phi_\su(\lambda_1\dots\lambda_{2^k})}^+\\
&=\tau_1\dots\tau_{2^k}\overline{\tau_1\dots\tau_{2^k}}^+\\
&=\tau_1\dots\tau_{2^{k+1}}.
\end{align*}
Here the first equality is justified by the fact that $\lambda_{2^k+1}=\overline{\lambda_1}=0=\overline{\su^+}$, so the expressions on the right side are well defined.

Consider next the case when $M$ is odd. We prove (ii) for $M=1$; the proof for other odd values of $M$ is the same modulo a renaming of the digits. Here we must prove that $\Phi_{\su}((\tau_i))=(\tau_i)$, where $\su=10$. We do this by induction, by showing that
\begin{equation}
\Phi_{\su}(\tau_1\dots\tau_{2^k})=\tau_1\dots\tau_{2^{k-1}}, \qquad\mbox{for all $k\geq 1$}.
\label{eq:finite-2-to-1-map}
\end{equation}
Note first that $\Phi_{\su}(\tau_1\tau_2)=\Phi_{\su}(\su^+)=1=\tau_1$, so \eqref{eq:finite-2-to-1-map} holds for $k=1$. The induction step proceeds essentially the same way as for the case $M=2$ above, so we omit the details.


To prove (iii), it is sufficient to show that $\hat{\Phi}_{\su}(q_{n+1}'(M))=q_n'(1)$ for all $n\geq 0$. This means that for even $M$ we have to show
\[
\Phi_\su\left(\lambda_1\dots\lambda_{2^n}\big(\overline{\lambda_1\dots\lambda_{2^n}}^+\big)^\f\right)=\tau_1\dots\tau_{2^n}(\overline{\tau_1\dots\tau_{2^n}}^+)^\f \qquad\forall n\in\N,
\]
and for odd $M$ we have to show
\[
\Phi_\su\left(\lambda_1\dots\lambda_{2^{n+1}}\big(\overline{\lambda_1\dots\lambda_{2^{n+1}}}^+\big)^\f\right)=\tau_1\dots\tau_{2^n}(\overline{\tau_1\dots\tau_{2^n}}^+)^\f \qquad\forall n\in\N.
\]
These equations follow in the same way as in the proof of (ii) above.
\end{proof}

\begin{proposition} \label{prop:location-of-n-irreducible-intervals}
All irreducible intervals, except $[q_G,q_T]$, lie in $I_0=(q_T,M+1]$. All $n$-irreducible intervals, except the one with right endpoint $q_{n+1}'$, lie in $I_n$, for $n\in\N$.
\end{proposition}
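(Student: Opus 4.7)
The plan is to establish part~(1) by a disjointness argument among irreducible intervals, and to reduce part~(2) to part~(1) applied to the binary alphabet via the transport map $\hat{\Phi}_{\sa_n}$.

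For part~(1), let $\sa\in\A_M$ be irreducible with $\sa\neq\su_M$. First note that $\su_M$ is itself irreducible: for even $M$ we have $|\su_M|=1$, which is too short to be any composition $\sc\circ\sd$ (whose length $|\sc||\sd|\ge 2$ since elements of $\A_1$ have length $\ge 2$), and for odd $M$ the minimal composition length is $4>2=|\su_M|$. As used in the proof of the uniqueness of the irreducible decomposition, Lemma~\ref{lem:irreducible-implies-maximal} implies that any two distinct irreducible fundamental intervals are disjoint, so $J_\sa\cap J_{\su_M}=\emptyset$. Next, every fundamental word $\sa'$ has $q_L(\sa')\in\vb$ (since $(\sa')^\infty\in\vs$), and a direct check of the defining inequalities of $\vs$ together with \eqref{eq:q_G} shows that $q_G=\min\vb$, attained only at $\alpha(q_G)$, which equals $(\sa')^\infty$ only for $\sa'=\su_M$. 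Thus $q_L(\sa)>q_G$, and since $J_\sa$ is disjoint from $[q_G,q_T]$ we conclude $q_L(\sa)>q_T$, so $J_\sa\subset(q_T,M+1]=I_0$.

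For part~(2), let $\sa$ be $n$-irreducible, say $\sa=\sa_n\circ\sb$ with $\sb\in\A_1$ irreducible. Iterating Lemma~\ref{lem:compositions} gives $\hat{\Phi}_{\sa_n}=\hat{\Phi}_{10}^{\,n-1}\circ\hat{\Phi}_{\su_M}$. Applying Lemma~\ref{lem:Phi-hat-properties}(iii) once to $\su_M$ (sending $q_{n+1}'(M)\mapsto q_n'(1)$) and then $n-1$ times to $\su_1=10$ in $\A_1$ (each application sending $q_{k+1}'(1)\mapsto q_k'(1)$), we obtain
\[
\hat{\Phi}_{\sa_n}\bigl(q_{n+1}'(M)\bigr)=q_1'(1)=q_T^*,\qquad \hat{\Phi}_{\sa_n}\bigl(q_n'(M)\bigr)=q_0'(1)=2,
\]
so $\hat{\Phi}_{\sa_n}$ maps $I_n\cap\vb$ bijectively onto $I_0^*\cap\vb^*$. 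Using the identities~\eqref{eq:four-identities} with $\sc=\sa_n\circ\sb$, the endpoints of $J_\sa$ are carried by $\hat{\Phi}_{\sa_n}$ to those of $J_\sb^*$. Part~(1), applied to the alphabet $\{0,1\}$, then yields two cases: either $\sb=\su_1=10$, in which case $\sa=\sa_n\circ 10=\sa_{n+1}$ and $q_R(\sa)=\hat{\Phi}_{\sa_n}^{-1}(q_T^*)=q_{n+1}'$; or $\sb$ is an irreducible word in $\A_1$ distinct from $10$, so that $J_\sb^*\subset(q_T^*,2]=I_0^*$, and strict monotonicity of $\hat{\Phi}_{\sa_n}^{-1}$ gives $q_L(\sa)>q_{n+1}'$, while $J_\sa\subset J_{\sa_n}$ gives $q_R(\sa)\le q_n'$, so $J_\sa\subset I_n$.

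The main obstacle will be the careful bookkeeping needed to iterate Lemma~\ref{lem:Phi-hat-properties}(iii) through both alphabets in order to reach the key identity $\hat{\Phi}_{\sa_n}(q_{n+1}')=q_T^*$; once this is in hand, part~(2) reduces cleanly to part~(1) for $M=1$ with essentially no additional combinatorial content.
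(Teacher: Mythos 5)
Your approach matches what the paper intends (its proof is a one-sentence appeal to Definition~\ref{def:irreducible-word} and Lemma~\ref{lem:Phi-hat-properties}, which is precisely the machinery you invoke), and the argument is essentially correct. There is one false intermediate claim: that for odd $M$ the minimal length of a composition $\sc\circ\sd$ is $4$. This holds only for $M=1$. For odd $M\geq 3$, $\A_M$ contains length-$1$ fundamental words (for $M=2k+1$ any digit in $\{k+1,\dots,2k\}$ qualifies by Definition~\ref{def:fundamental-words}), so compositions of length $1\cdot 2 = 2 = |\su_M|$ exist, and a length count alone does not show that $\su_M$ is irreducible. A repair that works uniformly in $M$: any composition $\sc\circ\sd=\Phi_\sc^{-1}(\sd)$ begins with the block $\sc^+\succ\sc$ (since every $\sd\in\A_1$ begins with the digit $1$, forcing the first edge to be $e_0$), hence $q_L(\sc\circ\sd)>q_L(\sc)\geq q_G$; but $q_L(\su_M)=q_G$, so $\su_M$ cannot be a composition. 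With that step fixed, the rest is sound: the disjointness of distinct irreducible intervals via Lemma~\ref{lem:irreducible-implies-maximal}, combined with $q_G=\min\vb$, gives part~(1), and the identity $\hat{\Phi}_{\sa_n}=\hat{\Phi}_{10}^{\,n-1}\circ\hat{\Phi}_{\su_M}$ together with Lemma~\ref{lem:Phi-hat-properties}(iii) and the endpoint-transport identities~\eqref{eq:four-identities} cleanly reduces part~(2) to part~(1) for $M=1$.
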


\begin{proof}
This follows directly from Definition \ref{def:irreducible-word} and Lemma \ref{lem:Phi-hat-properties}.
\end{proof}

Having established the action of the map $\hat{\Phi}_\su$, we now turn to investigate the entropy function
$H: q\mapsto h(\us_q)$.
We recall the following lexicographical characterization of $\us_q$ (see \cite{DeVries_Komornik_2009}): $(d_i)\in\us_q$ if and only if $(d_i)\in\Omega_M$ satisfies
\begin{equation}\label{eq:characterization-unique expansion}
\begin{split}
d_{n+1}d_{n+2}\ldots&\prec \al(q)\qquad\textrm{if}\quad d_n<M,\\
d_{n+1}d_{n+2}\ldots&\succ\overline{\al(q)}\qquad\textrm{if}\quad d_n>0.
\end{split}
\end{equation}
Motivated by this characterization, we define the simpler set
\begin{equation*}
\wus_q:=\set{(d_i)\in\Omega_M: \overline{\al(q)}\prec\sigma^n((d_i))\prec \al(q)\ \forall n\ge 0}.
\end{equation*}
Note that $h(\us_q)=h(\wus_q)$ (see \cite{Komornik-Kong-Li-17}). Write $H^*(q):=h(\us_q^*)$, and let 
\[
c_M:=\begin{cases}
1 & \mbox{if $M$ is even},\\
\frac12 & \mbox{if $M$ is odd}.
\end{cases}
\]

\begin{proposition} \label{prop:entropy-bridge}
Let $q\in(q_G,q_T]\cap\vb$, and put $\hat{q}:=\hat{\Phi}_{\su}(q)$. Then
\begin{enumerate}[(i)]
\item 
$
\Phi_{\su}\big(\big\{(x_i)\in \wus_q: x_1\dots x_{|\su|}=\su^+\big\}\big)=\big\{(y_i)\in\wus_{\hat{q}}^*: y_1=1\big\}.
$
\item $H(q)=c_M H^*(\hat{q})$.
\end{enumerate}
\end{proposition}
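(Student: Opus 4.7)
For part (i), I will verify both inclusions via the lex characterization of $\wus_q$ together with the identity $\al^*(\hat{q})=\Phi_\su(\al(q))$, which is immediate from the definition $\hat{q}=\hat{\Phi}_\su(q)$. For the forward direction, given $(x_i)\in\wus_q$ beginning with $\su^+$, the hypothesis $\al(q)\in X_\su$ (since $q\in(q_G,q_T]$) combined with the lex bounds $\overline{\al(q)}\prec\sigma^n((x_i))\prec\al(q)$ for \emph{all} $n$ forces an inductive block decomposition $(x_i)=\sd_1\sd_2\cdots$ with each $\sd_j\in L_\su$ following a valid path in $\mathcal{G}_\su$ from ``Start''; this parsing is essentially the content of \cite[Lemma 3.2]{Allaart-Kong-2018b}, with the mid-block shift inequalities precisely ruling out the forbidden transitions (e.g.~a block starting with $k+1$ following one that ends in state $A$). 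Setting $(y_i):=\Phi_\su((x_i))$, strict monotonicity of $\Phi_\su$ converts $\sigma^{n|\su|}((x_i))\prec\al(q)$ into $\sigma^n((y_i))\prec\al^*(\hat{q})$, and similarly for the lower bound. The backward direction runs symmetrically with $(x_i):=\Phi_\su^{-1}((y_i))$; the $|\su|$-aligned shift inequalities transfer via the same monotonicity, and the mid-block checks (trivial for $M$ even, since $|\su|=1$) reduce in the odd case to observing that the offset-one shift of $(x_i)$ always begins either with a symbol $\leq k$ or with $(k+1)k\dots$, both $\prec(k+1)(k+1)\dots=\al(q)$.

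For part (ii), let $A:=\{(x_i)\in\wus_q:x_1\dots x_{|\su|}=\su^+\}$ and $A^*:=\{(y_i)\in\wus_{\hat{q}}^*:y_1=1\}$. Part (i) provides a length-scaling bijection carrying length-$|\su|N$ prefixes of $A$ onto length-$N$ prefixes of $A^*$. By the reflection involution $(y_i)\mapsto\overline{(y_i)}$ on $\wus_{\hat{q}}^*$ (which swaps the cylinders $\{y_1=0\}$ and $\{y_1=1\}$), $\#B_N(A^*)=\tfrac12\#B_N(\wus_{\hat{q}}^*)$ for $N\geq 1$, and hence
\[
\frac{\log\#B_{|\su|N}(A)}{|\su|N}=\frac{1}{|\su|}\cdot\frac{\log\#B_N(A^*)}{N}\longrightarrow c_M H^*(\hat{q}).
\]
To conclude $H(q)=c_M H^*(\hat{q})$ it suffices to show $h(A)=H(q)$. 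This is trivial when $q\leq q_{KL}$ (both are zero by \cite{Komornik-Kong-Li-17}), and for $q>q_{KL}$ follows from the shift-invariance of $\wus_q$ combined with known transitivity results for $\wus_q$ (\cite{Alcaraz_Barrera_2014,AlcarazBarrera-Baker-Kong-2016,Allaart-Kong-2018b}), which permit any admissible length-$N$ prefix of $\wus_q$ to be prepended by a bounded word so as to begin with $\su^+$, giving $\#B_N(\wus_q)\leq C\cdot\#B_{N+c}(A)$ for constants $C,c$ independent of $N$; the reverse inequality $\#B_N(A)\leq\#B_N(\wus_q)$ is immediate.

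The principal obstacles will be the block-parsing claim in (i)---namely that every $(x_i)\in\wus_q$ beginning with $\su^+$ in fact lies in $X_\su$, which uses the right-resolving structure of $\mathcal{G}_\su$ and the lex inequalities at \emph{every} shift, not just the $|\su|$-aligned ones---and the entropy identification $h(A)=H(q)$ in (ii), which relies on transitivity properties of $\wus_q$ for $q>q_{KL}$ that are documented in the cited literature.
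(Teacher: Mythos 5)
Your proof of part (i) follows the right idea and matches the paper's cited argument (the paper simply defers (i) to \cite[Proposition 3.8(iii)]{Allaart-Kong-2018b}). The reflection-symmetry observation $\#B_N(A^*)=\tfrac12\#B_N(\wus^*_{\hat q})$ and the length-scaling computation are also correct and line up with the paper. The gap is in your final step, where you need $h(A)=H(q)$ with $A:=\{(x_i)\in\wus_q:x_1\dots x_{|\su|}=\su^+\}$.

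You justify $h(A)=h(\wus_q)$ by appealing to ``known transitivity results for $\wus_q$'' and claiming every admissible word can be reached from the prefix $\su^+$ after a \emph{uniformly bounded} connecting word. But the transitivity results you cite are established only for $q$ in the irreducible range $q\in(q_T,M+1]$ (Proposition \ref{prop:transitive} in this paper, \cite[Prop.~3.17]{AlcarazBarrera-Baker-Kong-2016}); here $q$ lies in $(q_G,q_T]$, where $\alpha(q)$ is never irreducible, and $\vs_q$ (and hence $\wus_q$) need \emph{not} be transitive. The whole structure of $*$-irreducibility arises precisely because transitivity fails in this range and must be recovered at a renormalized level. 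Even granting some transitivity, your argument additionally assumes a uniform bound on connecting-word lengths, which bare transitivity does not supply. The paper avoids this altogether: for $M$ odd (WLOG $M=1$), it partitions $\wus_q=A_{11}\cup A_{10}\cup A_{01}\cup A_{00}$ and observes that every sequence in $A_{10}$ is either $(10)^\infty$, or $(10)^j$ followed by an element of $A_{11}$, or $(10)^j 1$ followed by an element of $A_{00}$; this gives $\#B_n(A_{10})\leq n\#B_n(A_{11})$, whence $\#B_n(A_{11})\leq \#B_n(\wus_q)\leq (2n+2)\#B_n(A_{11})$, a purely combinatorial bound yielding $h(A_{11})=h(\wus_q)$. (For $M$ even, $|\su|=1$ and only a reflection is needed.) Replacing your transitivity argument with this explicit decomposition closes the gap.
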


\begin{proof}
Statement (i) is a special case of \cite[Proposition 3.8(iii)]{Allaart-Kong-2018b};
(ii) will follow from (i) once we show that
\begin{equation}
h(\wus_{\hat q}^*)=h\big(\big\{(y_i)\in\wus_{\hat{q}}^*: y_1=1\big\}\big)
\label{eq:first-one}
\end{equation}
and
\begin{equation}
h(\wus_q)=h\big(\big\{(x_i)\in \wus_q: x_1 \dots x_{|\su|}=\su^+\big\}\big),
\label{eq:first-two-ones}
\end{equation}
using that $\Phi_{\su}$ is a $|\su|$-block map.

The set $\big\{(y_i)\in\wus_{\hat{q}}^*: y_1=1\big\}$ and its reflection, $\big\{(y_i)\in\wus_{\hat{q}}^*: y_1=0\big\}$, together make up $\wus_{\hat{q}}^*$. This gives \eqref{eq:first-one}.

The proof of \eqref{eq:first-two-ones} is slightly more involved. Assume first that $M$ is odd, and without loss of generality suppose $M=1$. Then $\su^+=11$. Let $A_{kl}:=\big\{(x_i)\in \wus_q: x_1 x_2=kl\big\}$ for $k,l=0,1$. Since $\wus_q=A_{11}\cup A_{10}\cup A_{01}\cup A_{00}$, it follows by symmetry that
\[
\#B_n(\wus_q)=2\#B_n(A_{11})+2\#B_n(A_{10}).
\]
Every sequence in $A_{10}$ is either $(10)^\infty$, or of the form $(10)^j x_1 x_2\dots$ with $(x_i)\in A_{11}$, or of the form $(10)^j 1 x_1 x_2\dots$ with $(x_i)\in A_{00}$, for some $j\geq 0$. Hence 
\[
\#B_n(A_{10})\leq \sum_{j=0}^{n-1} \#B_j(A_{11})\leq n\#B_n(A_{11}),
\]
so that $\#B_n(A_{11})\leq\#B_n(\wus_q)\leq (2n+2)\#B_n(A_{11})$. Taking logarithms, dividing by $n$ and letting $n\to\infty$, it follows that $h(A_{11})=h(\wus_q)$, proving \eqref{eq:first-two-ones}.


If $M$ is even, then $|\su|=1$ and the proof of \eqref{eq:first-two-ones} is basically the same as that of \eqref{eq:first-one}, since any sequence in $\wus_q$ begins with either $\su$ or $\overline{\su}$.
\end{proof}


Applying Proposition \ref{prop:entropy-bridge} repeatedly, we obtain Lemma 5.4 of \cite{AlcarazBarrera-Baker-Kong-2016}:

\begin{corollary} \label{cor:entropy-of-qT}
For each $n\in\N$ we have
\[
H(q_n')=\frac{c_M\log 2}{2^{n-1}}.
\]
In particular, $H(q_T)=c_M\log 2$.
\end{corollary}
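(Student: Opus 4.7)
The plan is an induction on $n$ powered by Proposition \ref{prop:entropy-bridge}: we apply it first to reduce the problem from alphabet $\{0,\ldots,M\}$ to the alphabet $\{0,1\}$, and then iterate the same proposition within the binary alphabet to extract the factor $2^{-(n-1)}$.

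By Lemma \ref{lem:Phi-hat-properties}(iii) and the explicit calculations in its proof, $\hat{\Phi}_\su(q_{n+1}'(M)) = q_n'(1)$ for every $n \geq 0$; in particular, for $n = 0$ we get $\hat{\Phi}_\su(q_T(M)) = q_0'(1) = 2$. Each $q_n'(M)$ lies in $(q_G, q_T] \cap \vb$, so Proposition \ref{prop:entropy-bridge}(ii) yields
\[
H(q_T(M)) = c_M\, H^*(2), \qquad H(q_{n+1}'(M)) = c_M\, H^*(q_n'(1)) \quad (n \geq 1).
\]
Specializing the second identity to $M = 1$ (where $c_1 = 1/2$) produces the internal recursion $H^*(q_{n+1}'(1)) = \tfrac12 H^*(q_n'(1))$.

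The base value $H^*(2) = H^*(q_0'(1)) = \log 2$ is immediate from the definition of $\wus_q^*$: for $q = 2$ we have $\alpha(2) = 1^\infty$, and the only sequences excluded by the inequalities $0^\infty \prec \sigma^n((d_i)) \prec 1^\infty$ are those whose shifts eventually equal $0^\infty$ or $1^\infty$, a countable set that does not affect the entropy. A one-line induction on $k$ then gives $H^*(q_k'(1)) = \log 2 / 2^k$ for every $k \geq 0$, and inserting $k = n - 1$ into the displayed identities (the first handling $n = 1$, the second handling $n \geq 2$) yields
\[
H(q_n'(M)) = \frac{c_M \log 2}{2^{n-1}}.
\]
The ``In particular'' clause is the case $n = 1$.

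The only mildly non-routine step is the verification $\hat{\Phi}_\su(q_{n+1}'(M)) = q_n'(1)$, but this is already carried out via the explicit manipulation of $\lambda$- and $\tau$-sequences in the proof of Lemma \ref{lem:Phi-hat-properties}(iii). Everything else is clean assembly from that lemma and Proposition \ref{prop:entropy-bridge}, so I do not anticipate any real obstacle beyond careful index tracking.
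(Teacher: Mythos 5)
Your proposal is correct and follows essentially the same route as the paper, which gives no detailed argument beyond ``applying Proposition~\ref{prop:entropy-bridge} repeatedly.'' You supply exactly the intended details: one application of Proposition~\ref{prop:entropy-bridge} (via Lemma~\ref{lem:Phi-hat-properties}(iii)) reduces $H(q_{n+1}'(M))$ to $c_M H^*(q_n'(1))$, the $M=1$ case of the same proposition yields the halving recursion $H^*(q_{k+1}'(1))=\tfrac12 H^*(q_k'(1))$, and the base value $H^*(2)=\log 2$ is verified directly from $\alpha(2)=1^\infty$. Clean and complete.
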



\begin{corollary}
Let $q\in[q_G,q_T]\cap\vb$, and put $\hat{q}:=\hat{\Phi}_{\su}(q)$. Then
\[
\dim_H \u_q=c_M\frac{\log\hat{q}}{\log q}\dim_H \u_{\hat q}^*.
\]
\end{corollary}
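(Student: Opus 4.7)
The plan is to derive this immediately from two results already in hand: the Komornik--Kong--Li dimension formula \eqref{eq:dimension-formula}, and Proposition \ref{prop:entropy-bridge}(ii). The formula \eqref{eq:dimension-formula} applies with any alphabet size, so in particular it gives both
\[
\dim_H \u_q = \frac{h(\us_q)}{\log q} = \frac{H(q)}{\log q} \qquad \text{and} \qquad \dim_H \u_{\hat q}^* = \frac{h(\us_{\hat q}^*)}{\log \hat q} = \frac{H^*(\hat q)}{\log \hat q},
\]
where the second identity uses the alphabet $\{0,1\}$, i.e.\ $M=1$, which is the setting for all starred objects.

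Now I would insert Proposition \ref{prop:entropy-bridge}(ii), which says $H(q) = c_M H^*(\hat q)$. Substituting this into the first display yields
\[
\dim_H \u_q = \frac{c_M H^*(\hat q)}{\log q} = c_M\cdot\frac{\log \hat q}{\log q}\cdot\frac{H^*(\hat q)}{\log \hat q} = c_M\,\frac{\log \hat q}{\log q}\,\dim_H \u_{\hat q}^*,
\]
which is the desired identity.

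The only points that need a brief check are the hypotheses. Proposition \ref{prop:entropy-bridge} is stated for $q\in(q_G,q_T]\cap\vb$, while the corollary allows $q = q_G$; but at that endpoint $\hat q = q_G^* = (1+\sqrt 5)/2$ and both sides of the claimed equality vanish since $h(\us_{q_G})=0=h(\us_{q_G^*}^*)$ (the univoque set is countable at a generalized golden ratio by Glendinning--Sidorov), so the formula holds trivially there. There is no real obstacle; the statement is essentially a restatement of Proposition \ref{prop:entropy-bridge}(ii) divided through by $\log q$, once one recognizes that the factor $c_M\log\hat q/\log q$ is exactly the ratio needed to convert entropies into Hausdorff dimensions on the two sides.
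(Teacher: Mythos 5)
Your proof is correct and takes exactly the route the paper does: combine the dimension formula \eqref{eq:dimension-formula} with Proposition \ref{prop:entropy-bridge}(ii) and rearrange. You also correctly notice and patch the small hypothesis mismatch at the endpoint $q=q_G$, which the paper's one-line proof glosses over.
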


\begin{proof}
This follows at once from Proposition \ref{prop:entropy-bridge} and \eqref{eq:dimension-formula}.
\end{proof}



\begin{lemma} \label{lem:dense-intervals}
\begin{enumerate}[(i)]
\item The union of all irreducible intervals is dense in $I_0=[q_T,M+1]$.
\item For each $n\in\N$, the union of all $n$-irreducible intervals is dense in $I_n$.
\end{enumerate}
\end{lemma}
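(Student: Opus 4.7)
The plan is to reduce both parts of the lemma to a single ingredient: \emph{within each interval $I_n$ the fundamental intervals are dense}. Once this is established, the unique decomposition of a fundamental word into irreducible factors (from the unnamed proposition immediately following Definition~\ref{def:irreducible-interval}) upgrades the fundamental interval to an irreducible or $n$-irreducible one.

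To prove density of fundamental intervals in $I_n$, fix $q \in I_n$ and $\epsilon>0$. If $q \notin \vb$, then by the result of \cite{Kong_Li_2015} quoted inside the proof of Lemma~\ref{lem:irreducible-characterization}, $q$ lies in the interior of some fundamental interval $J_\sb$; as $J_\sb$ and $J_{\sa_n}$ both contain $q$ and fundamental intervals are either disjoint or nested, we have $J_\sb \subset J_{\sa_n}$. If instead $q \in \vb$, then $\alpha(q)=a_1a_2\ldots\in\vs$, so admissibility yields
\[
\overline{a_1\ldots a_{m-j}} \lle a_{j+1}\ldots a_m \lle a_1\ldots a_{m-j} \qquad \text{for all } 1\le j<m.
\]
Whenever the right-hand inequality is strict for every such $j$ and $a_m<M$, the word $a_1\ldots a_m$ is fundamental. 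A combinatorial argument should show that either $\alpha(q)=\sa^\f$ is already purely periodic with $\sa$ fundamental (in which case $q=q_L(\sa)$ and we are done immediately), or strictness holds along an infinite sequence of $m$'s, so that $(a_1\ldots a_m)^\f\to\alpha(q)$ and the corresponding left endpoints converge to $q$. Ruling out the pathological case where the equalities $a_{j+1}\ldots a_m = a_1\ldots a_{m-j}$ persist for every large $m$ is the main technical obstacle here.

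Given this density, part (i) is straightforward. For $q\in(q_T,M+1]$ take $\epsilon<q-q_T$, produce a fundamental $\sa$ with $q_L(\sa)\in(q-\epsilon,q+\epsilon)$, and decompose $\sa=\sc_1\circ \sc_2\circ\cdots\circ \sc_k$ with $\sc_1\in\A_M$ irreducible. Then $J_{\sc_1}\supseteq J_\sa$ is an irreducible interval meeting $(q-\epsilon,q+\epsilon)$; since $q_L(\sa)>q_T$, while $[q_G,q_T]=J_\su$ is the only irreducible interval not contained in $I_0$ (Proposition~\ref{prop:location-of-n-irreducible-intervals}) and distinct irreducible intervals are disjoint (a direct consequence of Lemma~\ref{lem:irreducible-implies-maximal} together with Definition~\ref{def:irreducible-word}(i)), we conclude $J_{\sc_1}\subset I_0$. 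Density at the left endpoint $q=q_T$ follows by approach from the right.

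For part (ii), fix $n\ge 1$, $q\in I_n$, $\epsilon>0$, and use the density to obtain a fundamental $\sb$ with $J_\sb\subset J_{\sa_n}$ meeting $(q-\epsilon,q+\epsilon)\cap I_n$. By Lemma~\ref{lem:irreducible-implies-maximal}, $\sb=\sa_n\circ \sc$ for some $\sc\in\A_1$. Decompose $\sc=\sd_1\circ \sd'$ with $\sd_1\in\A_1$ irreducible; then $\sa_n\circ \sd_1$ is $n$-irreducible by Definition~\ref{def:irreducible-word}(ii), and by associativity (Proposition~\ref{prop:fundamental-semigroup}) $\sb=(\sa_n\circ \sd_1)\circ \sd'$, whence $J_\sb\subset J_{\sa_n\circ \sd_1}$. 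Since $J_\sb$ meets $I_n$, so does $J_{\sa_n\circ \sd_1}$, which by Proposition~\ref{prop:location-of-n-irreducible-intervals} is therefore an $n$-irreducible interval lying in $I_n$ and intersecting the given neighborhood of $q$.
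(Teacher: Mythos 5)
Your plan—reduce both parts to density of the fundamental intervals in each $I_n$ and then pass to the maximal (i.e. irreducible or $n$-irreducible) ones via the unique decomposition—is exactly the paper's strategy, and your case $q\notin\vb$ is handled correctly. But the proof has a genuine gap, which you yourself flag: for $q\in\vb$ you have no way to produce fundamental prefixes of $\alpha(q)$ accumulating at $\alpha(q)$. That ``pathological case,'' in which for every large $m$ some proper suffix of $a_1\dots a_m$ coincides with $a_1\dots a_{m-j}$, actually occurs inside the intervals $I_n$: for instance the de~Vries--Komornik numbers $q_c(\sa)$ have $\alpha(q_c(\sa))$ built from a Thue--Morse-like block pattern, and its initial segments are not fundamental, so no sequence of fundamental prefixes converges to $\alpha(q_c(\sa))$. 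The paper avoids this entirely by invoking a second fact from \cite{Kong_Li_2015} that you omit: $\overline{\ub}$ has Lebesgue measure zero. Combined with the decomposition $(q_{KL},M+1]\setminus\overline{\ub}=\bigcup(p_0,q_0)$ with each $(p_0,q_0)=(q_L(\sa),q_c(\sa))$ contained in the interior of a fundamental interval, measure zero immediately forces the intervals $(p_0,q_0)$, and hence the $J_\sa$, to be dense in $(q_{KL},M+1]$ and in each $I_n$—with no case analysis on $q$ at all. Once that density is in hand, your deductions of (i) and (ii) via decomposition into irreducible factors, Lemma~\ref{lem:irreducible-implies-maximal}, and Proposition~\ref{prop:location-of-n-irreducible-intervals} are sound and agree with the paper's closing observation that the irreducible (resp.\ $n$-irreducible) intervals are precisely the maximal fundamental intervals in $(q_G,M+1]$ (resp.\ in $I_n$).
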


\begin{proof}
(i). Recall from \cite{Kong_Li_2015} that $(q_{KL},M+1]\backslash\overline{\ub}=\bigcup (p_0,q_0)$, where each interval $(p_0,q_0)$ is of the form $(q_L(\sa),q_c(\sa))$ for some fundamental word $\sa$. Here $q_c(\sa):=\hat{\Phi}_\sa^{-1}(q_{KL}^*)\in J_\sa$ is the {\em de Vries-Komornik number} associated with $J_\sa$. Furthermore, $\overline{\ub}$ has Lebesgue measure zero. Hence the intervals $(p_0,q_0)$ are dense in $(q_{KL},M+1]$. But then the intervals $J_\sa=[q_L(\sa),q_R(\sa)]$ are certainly dense in $(q_{KL},M+1]$. Since the irreducible intervals are the maximal fundamental intervals in $(q_G,M+1]$, it follows that their union is dense in $(q_T,M+1]$.

(ii). By the same reasoning as above, the fundamental intervals inside $I_n$ are dense in $I_n$. Since the $n$-irreducible intervals in $I_n$ are the maximal fundamental intervals in $I_n$, the result follows.
\end{proof}

\begin{theorem} \label{thm:entropy-plateaus-detail}
\begin{enumerate}[(i)]
\item The plateaus of $H$ in $(q_T,M+1]$ are exactly the irreducible intervals that lie inside $(q_{KL},M+1]$.
\item The plateaus of $H$ in $I_n$ are exactly the $n$-irreducible intervals that lie inside $(q_{KL},M+1]$, for each $n\in\N$.
\end{enumerate}
\end{theorem}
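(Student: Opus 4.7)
For Part (i), I treat the irreducible half of Theorem \ref{thm:entropy-plateaus} (which will be proved from scratch in Section \ref{sec:irreducible-plateaus}) as input; Proposition \ref{lem:irreducible-connection} identifies the two notions of ``irreducible'' (Definitions \ref{def:irreducible-sequence} and \ref{def:irreducible-word}), and Proposition \ref{prop:location-of-n-irreducible-intervals} places every such plateau except $[q_G,q_T]$ inside $(q_T,M+1]$. So Part (i) is essentially a reformulation in the algebraic language of Section \ref{sec:composition} of already-available input.

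For Part (ii), fix $n\in\N$. The strategy is to transfer the plateau question from $I_n$ down to $I_0^*$ via iterated composition, and then apply Part (i) in the already-settled case $M=1$. By repeated use of Lemma \ref{lem:compositions} and Lemma \ref{lem:Phi-hat-properties}(iii), the map
\[
\hat{\Phi}_{\sa_n}=\hat{\Phi}_{10}^{\circ(n-1)}\circ\hat{\Phi}_{\su_M}
\]
is an order-preserving bijection $I_n\cap\vb\to I_0^*\cap\vb^*$. Iterating Proposition \ref{prop:entropy-bridge}(ii)---once for the alphabet $\{0,\dots,M\}$ and then $n-1$ more times for the alphabet $\{0,1\}$, where $c_1=1/2$---yields
\[
H(q)=\frac{c_M}{2^{n-1}}\,H^*\!\big(\hat{\Phi}_{\sa_n}(q)\big)\qquad\text{for every }q\in I_n\cap\vb.
\]
Every $n$-irreducible word has the form $\sa=\sa_n\circ\sb$ with $\sb\in\A_1$ irreducible, and Lemma \ref{lem:compositions} restricts $\hat{\Phi}_{\sa_n}$ to a bijection $J_\sa\cap\vb\to J_\sb^*\cap\vb^*$.

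By Part (i) applied with $M=1$, each $J_\sb^*$ is a plateau of $H^*$ in $I_0^*$ with some positive constant value $h_\sb$, and distinct irreducibles $\sb$ yield distinct values. The entropy identity combined with continuity of $H$ then forces $H\equiv(c_M/2^{n-1})h_\sb$ on the whole of $J_\sa$, since $J_\sa\cap\vb$ is topologically dense in $J_\sa$ (the endpoints of every fundamental sub-interval of $J_\sa$ belong to $\vb$). Maximality of $J_\sa$ as a plateau follows from the disjointness of distinct $n$-irreducible intervals (a consequence of Lemma \ref{lem:irreducible-implies-maximal} together with the irreducibility of $\sb$ in $\A_1$) combined with the distinct $H$-values they carry. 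Conversely, any plateau $P\subset I_n$ is a nondegenerate interval, so by the density of $n$-irreducible intervals in $I_n$ (Lemma \ref{lem:dense-intervals}(ii)) it contains some $J_\sa$, whence maximality forces $P=J_\sa$. The one $n$-irreducible interval falling outside $I_n$, namely the one with right endpoint $q_{n+1}'$ (Proposition \ref{prop:location-of-n-irreducible-intervals}), satisfies $q_L(\sa_{n+1})<q_{KL}$---a direct lexicographic comparison of $\sa_{n+1}^\infty$ with the Komornik--Loreti sequence $(\lambda_i)$---and is therefore excluded automatically by the clause ``lie inside $(q_{KL},M+1]$.''

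The main technical obstacle I anticipate is the passage from the entropy identity on the dense set $I_n\cap\vb$ to actual constancy of $H$ on the full closed interval $J_\sa$; this is handled cleanly by combining the continuity of $H$ established in the Komornik--Kong--Li devil's-staircase theorem with the density of endpoints of fundamental sub-intervals inside $J_\sa$. A secondary verification, needed for the converse direction, is that $J_\sa=\hat{\Phi}_{\sa_n}^{-1}(J_\sb^*)$ on $\vb$-points---but this is immediate from the composition identity $\Phi_{\sa_n\circ\sb}^{-1}=\Phi_{\sa_n}^{-1}\circ\Phi_\sb^{-1}$ of Lemma \ref{lem:compositions}.
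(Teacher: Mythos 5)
Your proposal follows essentially the same route as the paper's proof. Part (i) is treated the same way (the transitivity argument from Section~\ref{sec:irreducible-plateaus} as input, reinterpreted via Proposition~\ref{lem:irreducible-connection}), and for Part (ii) you unroll the paper's induction on $n$ into a single $n$-fold composition $\hat{\Phi}_{\sa_n}=\hat{\Phi}_{10}^{\circ(n-1)}\circ\hat{\Phi}_{\su_M}$, then apply Part (i) in the case $M=1$. The paper instead peels off one factor of $\hat{\Phi}_\su$ per induction step; both arguments rest on Lemma~\ref{lem:Phi-hat-properties}, Lemma~\ref{lem:compositions}, Proposition~\ref{prop:entropy-bridge}, Lemma~\ref{lem:dense-intervals}, and Lemma~\ref{lem:irreducible-implies-maximal} in the same way, so this is a presentational rather than a mathematical difference.

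One step is wrong as stated, though easily repaired. You assert that $J_\sa\cap\vb$ is dense in $J_\sa$ because ``the endpoints of every fundamental sub-interval of $J_\sa$ belong to $\vb$.'' The endpoints do belong to $\vb$, but those endpoints are not dense in $J_\sa$: the open interval $(q_L(\sa),q_c(\sa))$, where $q_c(\sa)=\hat{\Phi}_\sa^{-1}(q_{KL}^*)$ is the de Vries--Komornik base (see the proof of Lemma~\ref{lem:dense-intervals}), contains no fundamental sub-interval and indeed no point of $\vb$ at all. Concretely, for $M=1$ and $\sa=110$, every $q\in(q_L(\sa),q_c(\sa))$ has $\alpha(q)=(110)^k111\dotsm$ for some $k\ge 0$, and $\sigma^{3k}(\alpha(q))\succ\alpha(q)$, so $\alpha(q)\notin\vs$. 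Thus continuity plus density of $\vb$ does not immediately give constancy of $H$ on all of $J_\sa$. The fix is cheap and is exactly what the paper does: the entropy identity is only needed at the two endpoints $q_L(\sa),q_R(\sa)\in\vb$, and once $H(q_L(\sa))=H(q_R(\sa))$ is established, constancy on $J_\sa$ follows from the monotonicity of $H$ (the Komornik--Kong--Li devil's-staircase theorem you cite already gives that $H$ is nondecreasing). If you prefer to salvage the continuity argument, you can note that $H$ is constant on $\{q_L(\sa)\}\cup\overline{\ub}\cap J_\sa$ and invoke monotonicity to fill the gap $(q_L(\sa),q_c(\sa))$.

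Two minor additional points. In the converse direction, density of $n$-irreducible intervals gives that a plateau $P\subset I_n$ \emph{intersects} some $J_\sa$, not that it contains one; but intersection plus maximality of both $P$ and $J_\sa$ already forces $P=J_\sa$, so the conclusion stands. And for the maximality of $J_\sa$ itself you should say explicitly that you are using Lemma~\ref{lem:dense-intervals}(ii) together with monotonicity of $H$ to produce, on either side of $J_\sa$ and arbitrarily close to it, another $n$-irreducible plateau carrying a strictly different value; disjointness alone does not rule out $H$ being constant on a larger interval.
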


\begin{proof}
Statement (i) is proved in \cite[Section 5.1]{AlcarazBarrera-Baker-Kong-2016}, but see also Section \ref{sec:irreducible-plateaus} below. We show how (ii) follows from (i) and the ideas in this article.
We proceed by induction. First, if we take $0$-irreducible to mean irreducible, statement (ii) holds for $n=0$ by part (i). Now let $k\in\N$, and suppose (ii) holds for all $n<k$. Let $J=[q_L,q_R]$ be a $k$-irreducible interval in $(q_{KL},M+1]$. Then $J\subset I_k$ by Proposition \ref{prop:location-of-n-irreducible-intervals}. Put $\hat{q}_L:=\hat{\Phi}_{\su}(q_L)$ and $\hat{q}_R:=\hat{\Phi}_{\su}(q_R)$. Then $\hat{J}:=[\hat{q}_L,\hat{q}_R]$ is a $(k-1)$-irreducible interval which lies inside $I_{k-1}$ by Lemma \ref{lem:Phi-hat-properties}, so it is an entropy plateau by the induction hypothesis. In particular, $H^*(\hat{q}_R)=H^*(\hat{q}_L)$. Then Proposition \ref{prop:entropy-bridge} implies that $H(q_R)=H(q_L)$, hence $H$ is constant on $J$. Next, take a point $r>q_R$. Then by Lemma \ref{lem:dense-intervals}(ii), there is a $k$-irreducible interval $J'=[q_L',q_R']$ with $q_R<q_L'<r$. Let $\hat{q}_L':=\hat{\Phi}_{\su}(q_L')$ and $\hat{q}_R':=\hat{\Phi}_{\su}(q_R')$. Then $\hat{J}':=[\hat{q}_L',\hat{q}_R']$ is an entropy plateau to the right of $\hat{J}$, so $H^*(\hat{q}_L')>H^*(\hat{q}_R)$. Hence, by Proposition \ref{prop:entropy-bridge} again, $H(q_L')>H(q_R)$. Thus $H(r)>H(q_R)$. In the same way, we can show that $H(p)<H(q_L)$ for every $p<q_L$. Therefore, $J$ is an entropy plateau.

Vice versa, every entropy plateau in $I_n$ must be an $n$-irreducible interval, because the $n$-irreducible intervals are dense in $I_n$.
\end{proof}

Proposition \ref{prop:entropy-bridge} gives new information about the entropy plateaus: Take $M=1$ for the moment. Then for any entropy plateau in $I_n$ (with $n\geq 1$) there is a corresponding entropy plateau in $I_{n-1}$ with twice the entropy. Vice versa, for any entropy plateau in $I_{n-1}$ there is a corresponding entropy plateau in $I_{n}$ with half the entropy. The graph of $H$ on $I_n$ looks much like a smaller version of the graph of $H$ on $I_{n-1}$, contracted vertically by a factor $2$ but badly distorted horizontally, as the map $\hat{\Phi}_{\su}$ is far from being linear.

By contrast, when $M\geq 2$ the graph of $H: q\mapsto h(\us_q)$ over $I_n$ for $n\geq 1$ looks more like a smaller copy of the graph of $H^*: q\mapsto h(\us_q^*)$ over $(q_T^*,2]$, rather than a smaller copy of the graph of $H$ itself over $(q_T,M+1]$. In fact, when $M$ is even, the values of $H$ on the entropy plateaus of $\us_q$ in $I_n$ are precisely the same as the values of $H^*$ on the entropy plateaus of $\us_q^*$ in $I_{n-1}^*$, for each $n\geq 1$. When $M$ is odd, they are half that big.

It also follows from Proposition \ref{prop:entropy-bridge} that the values of the functions $H_M: q\mapsto h(\us_q(M))$ for $q\in(q_T(M),M+1]$ and $M\geq 1$ completely determine the entropy of $\us_q$ for any $M$ and any $q\in(1,M+1]$.

\section{A shorter proof for irreducible plateaus} \label{sec:irreducible-plateaus}

In this section we give a more streamlined proof of Theorem \ref{thm:entropy-plateaus-detail}(i).  We first introduce the sets
\begin{equation*}
\vs_q:=\{(x_i)\in\Omega_M: \overline{\alpha(q)}\lle \sigma^n((x_i)) \lle \alpha(q)\textrm{ for all }n\ge 0\}.
\end{equation*}
Note that $\wus_q\subset \vs_q$. The point is that, while $\wus_q$ need not be a subshift of $\Omega_M$, $\vs_q$ always is. Moreover, $\vs_q\backslash\wus_q$ is countable, and $h(\wus_q)=h(\vs_q)$ (see \cite[Proposition 2.6]{Allaart-Kong-2018a}).

The key is to prove that for any irreducible interval $[q_L,q_R]$ in $[q_T,M+1]$, the subshift $\vs_{q_L}$ is transitive. Recall that a subshift $X$ of $\Omega_M$ is {\em (topologically) transitive} if, for any two words $u,v\in B_*(X)$, there is a word $w\in B_*(X)$ such that $uwv\in B_*(X)$. Here $B_*(X)$ denotes the set of all finite words occurring in sequences from $X$, including the empty word. We begin by proving the following crucial fact, which is Proposition 3.17 in \cite{AlcarazBarrera-Baker-Kong-2016}.

\begin{proposition} \label{prop:transitive}
Let $q\in(q_T,M+1]$. If $\alpha(q)$ is irreducible, then $\vs_q$ is a transitive subshift.
\end{proposition}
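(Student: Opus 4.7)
The plan is to construct, given any $u, v \in B_*(\vs_q)$, a bridging word $w$ such that $uwv \in B_*(\vs_q)$. Write $\alpha(q) = a_1 a_2 \ldots$. I would first establish a standard lexicographic characterization: a finite word $u = u_1 \ldots u_k$ lies in $B_*(\vs_q)$ if and only if every suffix $u_{i+1} \ldots u_k$ satisfies $\overline{a_1 \ldots a_{k-i}} \preccurlyeq u_{i+1} \ldots u_k \preccurlyeq a_1 \ldots a_{k-i}$. This reduces the problem to verifying a finite list of lexicographic inequalities for every suffix of $uwv$.

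The bridge $w$ will be built in two stages. Stage I extends $u$ by a \emph{reset} segment $s_1$, chosen so that $us_1 \in B_*(\vs_q)$ and no suffix of $us_1$ matches a prefix of $\alpha(q)$ or of $\overline{\alpha(q)}$ of length exceeding some absolute bound $L$ depending only on $q$. Thus the end of $us_1$ sits in the ``interior'' of the double-sided lexicographic constraint. Stage II, via a symmetric argument applied to $v$ (using that reflection is an automorphism of $\vs_q$), produces a short prepending $s_2$ with $s_2 v \in B_*(\vs_q)$ such that $s_2$ begins with an interior pattern. The bridge is then $w = s_1 s_2$, and admissibility of $u s_1 s_2 v$ follows by splicing: every new suffix of $us_1 s_2 v$ arising from the junction is either short enough to sit strictly inside the constraints, or coincides with an already-admissible suffix of $us_1$ or of $s_2 v$.

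The main obstacle is Stage I, which I would handle by a greedy decrement procedure. Suppose the longest suffix of the current extension matching a prefix of $\alpha(q)$ has length $l$; we attempt to extend by $a_{l+1} \ldots a_{l+j-1}(a_{l+j}-1)$, where $j$ is the least index with $a_{l+j} > 0$. Admissibility requires $(a_1 \ldots a_{l+j}^-)^\f \preccurlyeq \alpha(q)$ and that the continuation past the decrement stays above $\overline{\alpha(q)}$. Irreducibility of $\alpha(q)$ delivers precisely the inequality needed: whenever $(a_1 \ldots a_j^-)^\f \in \vs$, Definition \ref{def:irreducible-sequence} forces $a_1 \ldots a_j(\overline{a_1 \ldots a_j}^+)^\f \prec \alpha(q)$, providing strict lexicographic slack on both sides so that the extension can continue. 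The hypothesis $q > q_T$ ensures the initial digits of $\alpha(q)$ are large enough that each decrement genuinely reduces the length of the longest matching suffix, so the reset terminates in boundedly many iterations; in the degenerate subcases where $a_1 - \overline{a_1} = 1$ (no single ``neutral'' middle digit exists), I would replace the single-digit decrement by a short fixed-length block drawn from the first allowable decrement, with admissibility again guaranteed by irreducibility. The interplay of these two hypotheses, $q > q_T$ for baseline largeness and irreducibility for the slack in decrement steps, is what drives the entire argument, and the careful bookkeeping of suffix-matching lengths during the greedy reset is the most delicate part I would need to work out.
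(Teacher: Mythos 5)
Your overall strategy is close in spirit to the paper's: both proceed by iteratively extending $u$ so as to shrink the constrained suffix until it can be concatenated to $v$. However, the proposal has a genuine gap at exactly the point you flag as "the most delicate part": you assert without justification that the greedy decrement "genuinely reduces the length of the longest matching suffix" so that the reset "terminates in boundedly many iterations." This is not automatic, and is in fact the main technical content of the result. The difficulty is that decrementing the last digit of $a_1\dots a_l$ to produce $a_1\dots a_l^-$ does remove the match with a prefix of $\alpha(q)$, but may create a long match with a prefix of $\overline{\alpha(q)}$ instead: the suffix $a_{l-l_0+1}\dots a_l^-$ can equal $\overline{a_1\dots a_{l_0}}$ for an $l_0$ of size up to $l/2$, and a single further decrement does not obviously reduce $l_0$. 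Moreover, a single decrement is generally insufficient: after such a mirror match of length $l_0$ one must append not one decremented digit but a full block of the form $(a_1\dots a_{l_0}^-)^{r+1}$, where $r$ is an integer supplied by the irreducibility inequality $\alpha(q) \succ a_1\dots a_{l_0}\bigl(\overline{a_1\dots a_{l_0}}^+\bigr)^\f$. Without an argument that the matching length strictly contracts into a well-founded set, your reset could oscillate indefinitely.

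The paper resolves precisely this via a ladder of lengths $m_1 < m_2 < \dots$ at which $a_1\dots a_{m_j}^-$ is fundamental (Lemma~\ref{lem:at-most-double}, which uses both $q>q_T$ and irreducibility to guarantee $m_{j+1}\le 2m_j$), together with the observation that a mirror match of $a_1\dots a_{m_k}^-$ has length $l_0\le m_k/2$ (Lemma~\ref{lem:at-most-half}) and hence $l_0\le m_{k-1}$, allowing a clean induction down the ladder. A further structural fact — that $\alpha(q)$ cannot begin with $\sb\overline{\sb}$ for any word $\sb$ when it is irreducible and $q>q_T$ (Lemma~\ref{lem:no-mirror-image}) — is needed to dispose of the intermediate suffix positions created by the appended block $(a_1\dots a_{l_0}^-)^{r+1}$. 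Your proposal identifies the right role for irreducibility (supplying the strict inequality and hence the slack digit $r$) and for $q>q_T$ (guaranteeing a nontrivial starting rung $m_1$), but omits the chain $m_k \to l_0 \to m_{k-1}$ that makes the induction well-founded; as written, the reset procedure is not shown to terminate and the fixed bound $L$ you invoke need not exist. Your Stage~II (a symmetric prepending $s_2$ to $v$) is also unnecessary: once the matching suffix of $us_1$ has been driven down to the base rung $m_1$, the concatenation $us_1 v$ (or $us_1\,1\,v$ in the $M=1$, $m_1=2$ case) is admissible directly, which is how the paper's base case works.
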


\begin{lemma} \label{lem:at-most-double}
Let $q>q_T$ be such that $\alpha(q)=a_1 a_2\dots$ is irreducible, and assume $a_1=M$. Then there is a strictly increasing sequence $(m_j)$ of positive integers such that for each $j$, $a_1\dots a_{m_j}^-$ is fundamental. Moreover, the sequence $(m_j)$ can be chosen so that $m_{j+1}\leq 2m_j$ for every $j$, and
\[
m_1=\begin{cases}
2, & \mbox{if $M=1$},\\
1, & \mbox{if $M\geq 2$}.
\end{cases}
\]
\end{lemma}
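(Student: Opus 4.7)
My plan is to prove the lemma by induction on $j$. The base case follows directly from the prescribed value of $m_1$: when $M \geq 2$, the single-letter word $a_1^- = M-1$ is fundamental since $\overline{M-1} = 1 \leq M-1 < M$; when $M = 1$, the hypothesis $q > q_T$ combined with $\alpha(q_T) = 11(01)^\infty$ forces $a_1 = a_2 = 1$, so $a_1 a_2^- = 10$ is fundamental.

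For the inductive step, set $\sa := a_1 \ldots a_{m_j}^-$. Since $\sa$ is fundamental, $\sa^\infty \in \vs$, and the irreducibility of $\alpha(q)$ applied at index $m_j$ in Definition~\ref{def:irreducible-sequence} gives $\alpha(q) \succ \sa^+(\overline\sa)^\infty$. Let $\ell > m_j$ be the smallest index where $\alpha(q)$ differs from this reference sequence, so $a_{m_j+s} = \overline{a_s}$ for $s = 1, \ldots, \ell - m_j - 1$ and $a_\ell$ strictly exceeds the reference value at position $\ell$. If $\ell > 2m_j$, a direct computation yields
\[
a_1 \ldots a_{2m_j}^- = \sa^+ \overline{\sa^+} = \Phi_\sa^{-1}(10) = \sa \circ 10,
\]
which is fundamental by Lemma~\ref{lem:closed-under-composition}, so I set $m_{j+1} = 2m_j$. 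Otherwise $m_j < \ell \leq 2m_j$, and I set $m_{j+1} = \ell$, verifying that $a_1 \ldots a_\ell^-$ satisfies the fundamentality inequalities $\overline{a_1 \ldots a_{\ell - i}} \lle (a_{i+1}\ldots a_\ell)^- \prec a_1 \ldots a_{\ell - i}$ for each $1 \leq i < \ell$.

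The upper bound follows from $\sigma^i \alpha(q) \lle \alpha(q)$ (Lemma~\ref{lem:quasi-greedy expansion-alpha-q}), while the lower bound reduces to ruling out the equality $a_{i+1}\ldots a_\ell = \overline{a_1 \ldots a_{\ell - i}}$. I argue by contradiction in three sub-cases. If $i = m_j$, the equality forces $a_\ell = \overline{a_{\ell - m_j}}$, contradicting the minimality of $\ell$ (separately when $\ell < 2m_j$ and when $\ell = 2m_j$, using the appropriate reference digit). If $i < m_j$, restricting the equality to positions $i+1, \ldots, m_j$ yields $(a_{i+1}\ldots a_{m_j})^- \prec \overline{a_1\ldots a_{m_j - i}}$, contradicting the fundamentality of $\sa$ applied at index $i$. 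If $i > m_j$, writing $p := i - m_j \geq 1$, the equality combined with minimality produces both the period-$p$ relation $a_{t+p} = a_t$ for $1 \leq t \leq \ell - m_j - 1 - p$ and the strict inequality $a_{\ell - m_j - p} < a_{\ell - m_j}$; together these violate $\sigma^p \alpha(q) \lle \alpha(q)$.

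The main obstacle is the last sub-case of Case B, whose periodicity argument must coordinate the shift invariance of $\vs$ with the minimality of $\ell$. Degenerate situations ($\ell = m_j + 1$ makes the $i > m_j$ sub-case vacuous; $p = \ell - m_j - 1$ makes the periodicity relation empty) need to be checked separately, but they yield contradictions in the same spirit, most directly from the impossibility of $a_{\ell - m_j} > a_1 = M$.
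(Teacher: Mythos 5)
Your proof is correct and follows essentially the same inductive construction as the paper: it picks the first index past $m_j$ at which $\alpha(q)$ strictly exceeds the reference sequence forced by irreducibility, and this yields the same value of $m_{j+1}$ as the paper's $k=\min\{i>m_j:a_i>\overline{a_{i-m_j}}\}$ in every case. Where the paper simply asserts that $a_1\dots a_k^-$ is fundamental is ``easy to see,'' you supply the verification, splitting into $\ell>2m_j$ (handled slickly as $\sa\circ 10$ via Lemma \ref{lem:closed-under-composition}) and $\ell\le 2m_j$ (handled by the three-sub-case lexicographic argument, implicitly using $\alpha(q)\in\vs$ to reduce the lower bound to ruling out equality); both the periodicity contradiction for $i>m_j$ and the degenerate-case checks are sound.
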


\begin{proof}
When $M=1$, $q>q_T$ implies that $a_1 a_2=11$, and $10$ is fundamental so we can take $m_1=2$. When $M\geq 2$, the assumption $a_1=M$ implies that $a_1^-$ is fundamental, so we can take $m_1=1$.

We now proceed by induction. Suppose $m_1,\dots, m_j$ have been constructed satisfying the required properties. In particular, $a_1\dots a_{m_j}^-$ is fundamental. Let $u=a_1\dots a_m$, where $m:=m_j$. Since $(a_i)$ is irreducible, $(a_i)\lge u\overline{u}^+\succ u\overline{u}$. Let $k:=\min\{i>m:a_i>\overline{a_{i-m}}\}$, so $m<k\leq 2m$. It is easy to see using Definition \ref{def:fundamental-words} and Lemma \ref{lem:quasi-greedy expansion-alpha-q} that $a_1\dots a_k^-$ is fundamental. Thus, we can set $m_{j+1}=k$.
\end{proof}

The next lemma is a restatement of \cite[Lemma 3.13]{AlcarazBarrera-Baker-Kong-2016}.

\begin{lemma} \label{lem:at-most-half}
Let $u=a_1\dots a_m$ be a word such that $u^-$ is fundamental. Suppose for some $i$ we have $a_{m-i+1}\dots a_m^-=\overline{a_1\dots a_i}$. Then $i\leq m/2$.
\end{lemma}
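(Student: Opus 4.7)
My plan is to argue by contradiction. Suppose $i > m/2$, set $j := m - i$ and $\ell := i - j = 2i - m$, so that $1 \leq j < i$, $\ell \geq 1$, and $2j < m$; in particular the fundamental inequality for $u^-$ at shift $i' = 2j$ is available. The strategy is to use the reflective hypothesis to force this inequality to degenerate to an equality, contradicting its strict right-hand side.

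First I would read off the hypothesis $a_{m-i+1}\ldots a_m^- = \overline{a_1 \ldots a_i}$ entrywise: it says
\[
a_{j+k} = \overline{a_k} \quad (1 \leq k \leq i-1), \qquad a_m = \overline{a_i} + 1.
\]
Specializing to $k = \ell$ (legal since $1 \leq \ell \leq i-1$) gives $a_i = \overline{a_\ell}$, and hence $a_m - 1 = \overline{a_i} = a_\ell$. Next, iterating the reflection for $1 \leq k \leq \ell - 1$ (so that $j + k \leq i - 1$ and the hypothesis applies twice) yields $a_{2j+k} = \overline{a_{j+k}} = a_k$, which means $a_{2j+1}\ldots a_{m-1}$ coincides with $a_1\ldots a_{\ell-1}$ (vacuously when $\ell = 1$). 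Combining,
\[
a_{2j+1}\ldots a_{m-1}(a_m - 1) = a_1 \ldots a_\ell.
\]

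To finish, I would invoke the fundamental condition for $u^-$ at index $i' = 2j$, which demands $a_{2j+1}\ldots a_{m-1}(a_m - 1) \prec a_1\ldots a_{m-2j} = a_1\ldots a_\ell$; but the preceding display shows that the two sides are equal, violating strictness. Hence $i \leq m/2$. I do not anticipate any real obstacle; the only step I want to double-check is the edge case $\ell = 1$ ($m$ odd, $i = (m+1)/2$), where the ``period $2j$'' claim is vacuous but the identity $a_m - 1 = a_\ell = a_1$ still collapses the inequality to the impossible $a_1 \prec a_1$. Apart from this, the whole argument is a direct combinatorial read-off from the definition of fundamental and the reflective hypothesis.
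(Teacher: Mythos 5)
Your proof is correct modulo one small unjustified claim: you assert $1\le j$, i.e.\ $i<m$, but this does not follow from $i>m/2$ alone. It does follow from the hypotheses of the lemma, however. If $i=m$ (and $m\ge 2$), the first entry of the hypothesized identity would give $a_1=\overline{a_1}$, while the fundamental inequality for $u^-$ at shift $m-1$ gives $\overline{a_1}\le a_m-1<a_1$ and hence $\overline{a_1}<a_1$, a contradiction; the case $m=1$ is similarly seen to be vacuous. With that line in place, the rest closes cleanly: the double reflection $a_{2j+k}=\overline{a_{j+k}}=a_k$ for $1\le k\le \ell-1$, together with $a_m-1=a_\ell$, forces the strict inequality $a_{2j+1}\dots a_{m-1}(a_m-1)\prec a_1\dots a_{m-2j}=a_1\dots a_\ell$ from the definition of fundamental to degenerate into an equality. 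Note that the paper does not prove this lemma itself but cites Lemma 3.13 of Alcaraz Barrera, Baker and Kong, so there is no in-paper argument to compare against; your mechanism---iterate the reflective hypothesis until it collides with the strictness of the fundamental condition---is the natural one.
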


\begin{lemma} \label{lem:no-mirror-image}
Let $q>q_T$ and suppose $\alpha(q)=a_1 a_2\dots$ is irreducible, where $a_1=M$. Then there does not exist a word $\sb$ such that $\alpha(q)$ begins with $\sb\overline{\sb}$. Hence, for each $m\in\N$,
\[
a_1\dots a_{2m}\succ a_1\dots a_m \overline{a_1\dots a_m}.
\]
\end{lemma}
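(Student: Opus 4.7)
My plan is to prove the first assertion by contradiction; the ``hence'' clause will follow mechanically, since $\alpha(q)\in\vs$ already gives $a_1\dots a_{2m}\lge a_1\dots a_m\overline{a_1\dots a_m}$ via $\sigma^m(\alpha(q))\lge \overline{\alpha(q)}$, and equality would produce exactly a forbidden prefix $\sb\overline{\sb}$ with $\sb=a_1\dots a_m$.

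So suppose $\alpha(q)$ begins with $\sb\overline{\sb}$, $|\sb|=m$, and hence $a_{m+1}\dots a_{2m}=\overline{a_1\dots a_m}$. The hypotheses $q>q_T$ and $a_1=M$ put Lemma~\ref{lem:at-most-double} in force, supplying an infinite strictly increasing sequence $(m_j)$ with $a_1\dots a_{m_j}^-$ fundamental, $m_{j+1}\leq 2m_j$, and $m_1\leq 2$. I would then select $m_k$ as the smallest element of this sequence exceeding $m$; the at-most-doubling property (with $m_1\leq 2\leq 2m$ taking care of the edge case $k=1$, which forces $m=1$, $m_k=2$) pins down $m<m_k\leq 2m$. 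Writing $n:=m_k$ and $i:=n-m\in\{1,\dots,n-1\}$, the positions $m+1,\dots,n$ lie inside the reflected block of the prefix, so $a_{n-i+1}\dots a_n=\overline{a_1\dots a_i}$. I would then apply Definition~\ref{def:fundamental-words} to $u^-=a_1\dots a_n^-$ with the index choice corresponding to the last $i$ digits ($j=m$, so $n-j=i$), which demands
\[
\overline{a_1\dots a_i}\lle a_{n-i+1}\dots a_{n-1}(a_n-1).
\]
But our relation makes the right-hand side equal to $\overline{a_1\dots a_{i-1}}(\overline{a_i}-1)=\overline{a_1\dots a_i}^-$; and since the last digit $a_n=\overline{a_i}$ must be $\geq 1$ for $u^-$ to be a legitimate fundamental word, this is lexicographically strictly less than $\overline{a_1\dots a_i}$, yielding the desired contradiction.

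The main obstacle is the careful placement of $m_k$: I must verify that such an index always exists in $(m,2m]$ so that $i$ is a legitimate value for Definition~\ref{def:fundamental-words}; this takes a couple of lines using the at-most-doubling property, with a little extra care when $m=1$. Once $m_k$ is in hand the contradiction collapses to a single ``off-by-one'' comparison in the last coordinate. It is worth noting that Lemma~\ref{lem:at-most-half} does not enter this argument; it is presumably reserved for elsewhere in the proof of Proposition~\ref{prop:transitive}.
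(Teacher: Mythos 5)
Your proof is correct, and it takes a genuinely different route from the paper. The paper argues as follows: if $\alpha(q)$ begins with $\sb\overline{\sb}$, then $\alpha(q)\in\vs$ forces $\alpha(q)=(\sb\overline{\sb})^\infty$, so $\sb\overline{\sb}$ is fundamental, hence $\sb^-$ is fundamental (the degenerate case $M=\sb=1$ being excluded by $q>q_T$); then $\sb\overline{\sb}=\sb^-\circ(10)$, so $\sb\overline{\sb}$ is reducible as a word, contradicting irreducibility of $\alpha(q)$ via Proposition~\ref{lem:irreducible-connection}. In contrast, you never appeal to the periodicity of $\alpha(q)$ or to the composition semigroup at all; you instead use Lemma~\ref{lem:at-most-double} to locate some $m_k\in(m,2m]$ with $a_1\dots a_{m_k}^-$ fundamental, and then read off a contradiction from the lower lexicographic bound in Definition~\ref{def:fundamental-words} applied at the index $j=m$, where the ``$-$'' at position $m_k$ produces a strict violation. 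Both proofs hinge on the hypothesis that $\alpha(q)$ is irreducible, but the paper does so through the algebraic framework (composition and maximality of fundamental intervals), which is the conceptual thread of the paper, while your proof is purely combinatorial and self-contained given Lemma~\ref{lem:at-most-double}. Your observation that Lemma~\ref{lem:at-most-half} is not needed here is accurate: in the paper it is invoked only later, in the proof of Proposition~\ref{prop:transitive}.

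Two small points worth tightening if you were to write this up: you should state explicitly that the sequence $(m_j)$ produced by Lemma~\ref{lem:at-most-double} is infinite (it is, since the construction in its proof always produces a next term), so that an $m_k>m$ genuinely exists; and in the degenerate case $a_n=\overline{a_i}=0$ (which can only happen when $i=1$), the word $a_1\dots a_{m_k}^-$ is not even well formed, so the contradiction with Lemma~\ref{lem:at-most-double} occurs one step earlier than the lexicographic comparison you describe — your parenthetical ``$a_n$ must be $\geq 1$'' already covers this, but it would be cleaner to separate the two cases.
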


\begin{proof}
If $\alpha(q)$ begins with $\sb\overline{\sb}$, then in order for $\alpha(q)\in\vs$ we must have $\alpha(q)=\big(\sb\overline{\sb}\big)^\f$. This means $\sb\overline{\sb}$ is fundamental, which implies $\sb^-$ is fundamental. (Since $q>q_T$, the possible exception $M=\sb=1$ is ruled out.) But then $\sb\overline{\sb}=\sb^-\circ(10)$, contradicting that $\alpha(q)$ is irreducible in view of Proposition \ref{lem:irreducible-connection}.
\end{proof}

\begin{proof}[Proof of Proposition \ref{prop:transitive}]
It was shown in \cite{Komornik-Kong-Li-17} that $\vs_q$ is a subshift of $\Omega_M$ for every $q\in(1,M+1]$. Fix $q\in(q_T,M+1]$ such that $\alpha(q)$ is irreducible. 
Write $\alpha(q)=a_1 a_2\dots$. We may assume that $a_1=M$, as otherwise sequences from $\vs_q$ use only the digits $1,2,\dots,M-1$, and we are effectively in the case of a smaller alphabet.

Let $(m_j)$ be the sequence given by Lemma \ref{lem:at-most-double}. We will show by induction that

\bigskip
$(*)$ for each $j\in\N$ and for each word $v\in B_*(\vs_q)$, there is a word $w\in B_*(\vs_q)$ 

\ \ \ \ \ such that $a_1\dots a_{m_j}^-wv\in B_*(\vs_q)$. 

\bigskip
\noindent This suffices to prove transitivity, for the following reason: Suppose $u=u_1\dots u_k$ is an arbitrary word in $B_*(\vs_q)$. If
\[
\overline{a_1\dots a_{k-i}}\prec u_{i+1}\dots u_k \prec a_1\dots a_{k-i} \qquad\mbox{for all $0\leq i<k$},
\]
then $u$ and $v$ can be concatenated: $uv\in B_*(\vs_q)$. Otherwise, let $i_0$ be the smallest $i$ such that $u_{i+1}\dots u_k=a_1\dots a_{k-i}$ or $\overline{a_1\dots a_{k-i}}$. Then the above inequalities hold for all $i<i_0$. We may assume by symmetry that $u_{i_0+1}\dots u_k=a_1\dots a_{k-i_0}$. Then $u_{i_0+1}\dots u_k$ can be extended to a word of the form $a_1\dots a_{m_j}^-$ for some sufficiently large $j$. By $(*)$, this word can be connected to any $v\in B_*(\vs_q)$. Hence the original word $u$ can be connected to any $v\in B_*(\vs_q)$.

We now proceed to prove $(*)$.
Take first $j=1$. If $M=1$, then $m_1=2$ and $u:=a_1 a_2^-=10$. If $v$ begins with $1$, then $uv\in B_*(\vs_q)$; otherwise, $u1v\in B_*(\vs_q)$. If, on the other hand, $M\geq 2$, then $u:=a_1^-$ satisfies $\overline{a_1}<u<a_1$ (since $a_1=M$), and hence $uv\in B_*(\vs_q)$.
This finishes the basis for the induction. 

Next, let $k\geq 2$ and suppose $(*)$ has been proven for all $j<k$, so $a_1\dots a_{m_j}^-$ can be connected to any word $v$, for all $j<k$. Consider $u:=a_1\dots a_{m_k}^-$. If there is no $l<m_k$ such that
\[
a_{m_k-l+1}\dots a_{m_k}^-=\overline{a_1\dots a_l},
\]
then $u$ and $v$ can be connected directly: $uv\in B_*(\vs_q)$. Otherwise, let $l_0$ be the {\em largest} such $l$. Note by Lemma \ref{lem:at-most-half} that $l_0\leq m_k/2$, and hence by Lemma \ref{lem:at-most-double} that $l_0\leq m_{k-1}$. 

If $M=1$ and $l_0=1$, then we can connect $u$ and $v$ very easily. Namely, if $v$ begins with $1$, then $uv\in B_*(\vs_q)$; whereas otherwise $u1v\in B_*(\vs_q)$. So we can assume that either $M\geq 2$ or $l_0\geq 2$. We claim that $a_1\dots a_{l_0}^-$ is fundamental. To see this, note first that $a_{i+1}\dots a_{l_0}^-\prec a_{i+1}\dots a_{l_0}\lle a_1\dots a_{l_0-i}$ for $1\leq i<l_0$ by Lemma \ref{lem:irreducible-connection}. We must verify that
\begin{equation}
a_{i+1}\dots a_{l_0}^-\lge \overline{a_1\dots a_{l_0-i}}, \qquad 1\leq i<l_0.
\label{eq:fundamental-check-l0}
\end{equation}
Since $a_1\dots a_{m_k}^-$ is fundamental, we have that
\[
a_{i+1}\dots a_{l_0}a_{l_0+1}\dots a_{m_k}^-\lge \overline{a_1\dots a_{m_k-i}},
\]
whence $a_{i+1}\dots a_{l_0}\lge \overline{a_1\dots a_{l_0-i}}$. Suppose equality holds. Then
\[
a_1\dots a_{l_0-i}=\overline{a_{i+1}\dots a_{l_0}}=a_{m_k-l_0+i+1}\dots a_{m_k}^-,
\]
contradicting the admissiblity of $a_1\dots a_{m_k}^-$. Thus, we have \eqref{eq:fundamental-check-l0}.

It now follows that $(a_1\dots a_{l_0}^-)^\f\in\vs$. Since $(a_i)$ is irreducible, this implies
\[
(a_i)\succ a_1\dots a_{l_0}\big(\overline{a_1\dots a_{l_0}}^+\big)^\f.
\]
Hence, there is an integer $r\geq 0$ and a block $C$ of length $l_0$ with $C\succ \overline{a_1\dots a_{l_0}}^+$ such that $(a_i)$ begins with $a_1\dots a_{l_0}\big(\overline{a_1\dots a_{l_0}}^+\big)^r C$. 
Consider now the word
\[
\tilde{u}:=a_1\dots a_{m_k}^-\big(a_1\dots a_{l_0}^-\big)^{r+1},
\]
which is an extension of $u$. Note that $|\tilde{u}|=m_k+l_0(r+1)=:n$. It is clear from Definition \ref{def:fundamental-words} that $\tilde{u}_{i+1}\dots\tilde{u}_n\prec a_1\dots a_{n-i}$ for each $0\leq i<n$. We only need to show that 
\begin{equation}
\tilde{u}_{i+1}\dots\tilde{u}_n\succ \overline{a_1\dots a_{n-i}}
\label{eq:u-tilde-sandwich}
\end{equation}
for all $i<n-l_0$; the induction hypothesis will then imply that $\tilde{u}$ can be connected to any word $v$, since $l_0\leq m_{k-1}$ so $a_1\dots a_{l_0}^-$ can be connected to any word $v$. We break the verification of \eqref{eq:u-tilde-sandwich} in two cases:

\bigskip
{\em (a)} $i<m_k$. Suppose $a_{i+1}\dots a_{m_k}^-=\overline{a_1\dots a_{m_k-i}}$ (otherwise we are done). Then $m_k-i\leq l_0$ by the definition of $l_0$. If $m_k-i=l_0$, then by the choice of $r$,
\begin{align*}
\overline{\tilde{u}_{i+1}\dots\tilde{u}_n}&=a_1\dots a_{l_0}\big(\overline{a_1\dots a_{l_0}}^+\big)^{r+1}\\
&\prec a_1\dots a_{l_0}\big(\overline{a_1\dots a_{l_0}}^+\big)^r C\\
&=a_1\dots a_{n-i},
\end{align*}
so \eqref{eq:u-tilde-sandwich} holds. Assume therefore that $m_k-i<l_0$. Then
\[
\overline{\tilde{u}_{i+1}\dots\tilde{u}_{2m_k-i}}=a_1\dots a_{m_k-i}\,\overline{a_1\dots a_{m_k-i}}\prec a_1\dots a_{2m_k-i}
\]
by Lemma \ref{lem:no-mirror-image}, and again we obtain \eqref{eq:u-tilde-sandwich}.

\bigskip

{\em (b)} $m_k\leq i<n-l_0$. (Observe this case only happens if $r\geq 1$.) Write $i=m_k+tl_0+j$ where $t\in\{0,1,\dots,r-1\}$ and $0\leq j<l_0$. Then $\overline{\tilde{u}_{i+1}\dots\tilde{u}_n}$ begins with $\overline{a_{j+1}\dots a_{l_0}}^+$, and is followed by at least one block $\overline{a_1\dots a_{l_0}}^+$. Suppose $\overline{a_{j+1}\dots a_{l_0}}^+=a_1\dots a_{l_0-j}$ (otherwise we are done). We argue that $j$ cannot be zero, in other words, that $\overline{a_1\dots a_{l_0}}^+\prec a_1\dots a_{l_0}$. This is clear if $l_0\geq 2$ since $\overline{a_1}<a_1$; whereas if $l_0=1$ we have $M\geq 2$ by our earlier assumption, so $\overline{a_1}^+=\overline{M}^+=0^+=1<M=a_1$.

Thus, $j\geq 1$. But then $\overline{\tilde{u}_{i+1}\dots\tilde{u}_n}$ begins with $a_1\dots a_{l_0-j}\overline{a_1\dots a_{l_0-j}}$, which is smaller than $a_1\dots a_{2(l_0-j)}$ by Lemma \ref{lem:no-mirror-image}.

Since we verified \eqref{eq:u-tilde-sandwich} for all $i<n-l_0$, the proof is complete.
\end{proof}

\begin{remark}
The definition of $l_0$ in the above proof is closely related to the {\em reflection recurrence word} introduced in \cite{AlcarazBarrera-Baker-Kong-2016}. Alcaraz Barrera et al.~\cite{AlcarazBarrera-Baker-Kong-2016} use the reflection recurrence to directly construct the connecting word $w$ between $u$ and $v$. Their technique is very similar to ours. However, we prefer the above approach using induction, which allows us to keep technicalities to a minimum and bring the main ideas of the proof into better focus.
\end{remark}

From here, the proof goes essentially as in \cite[Section 5.1]{AlcarazBarrera-Baker-Kong-2016}. We present it here with a few more simplifications.

\begin{proposition} \label{prop:constant-entropy}
Let $[p_L,p_R]$ be an irreducible interval in $(q_{KL},M+1]$. Then $H(p_R)=H(p_L)$.
\end{proposition}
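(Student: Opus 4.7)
The plan is to prove $H(p_L)\le H(p_R)$ and $H(p_R)\le H(p_L)$ separately. Writing $\sa:=a_1\dots a_m$ so that $\alpha(p_L)=\sa^\infty$ and $\alpha(p_R)=\sa^+\overline{\sa}^\infty$, the first direction is immediate: since $\alpha(p_L)\prec\alpha(p_R)$ and $\overline{\alpha(p_L)}\succ\overline{\alpha(p_R)}$, the lexicographic sandwich defining $\vs_{p_L}$ is strictly tighter than the one defining $\vs_{p_R}$, so $\vs_{p_L}\subseteq\vs_{p_R}$. Monotonicity of topological entropy under inclusion, combined with the identities $h(\us_q)=h(\wus_q)=h(\vs_q)$ recalled in the text, yields $H(p_L)\le H(p_R)$.

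For the reverse inequality the essential tool will be Proposition~\ref{prop:transitive}. It applies here because $p_L>q_T$ (an irreducible interval contained in $(q_{KL},M+1]$ is never $[q_G,q_T]$, so by Proposition~\ref{prop:location-of-n-irreducible-intervals} it lies in $(q_T,M+1]$) and $\sa^\infty=\alpha(p_L)$ is irreducible in the sense of Definition~\ref{def:irreducible-sequence} by Proposition~\ref{lem:irreducible-connection}. Hence $\vs_{p_L}$ is a transitive subshift, and I will use this transitivity to bound $\#B_n(\vs_{p_R})$ against $\#B_n(\vs_{p_L})$ by only a polynomial factor, which is enough to conclude $h(\vs_{p_R})\le h(\vs_{p_L})$.

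The guiding structural observation is that $\vs_{p_L}$ coincides with the set of $(x_i)\in\Omega_M$ whose length-$m$ factors all lie in $[\overline{\sa},\sa]$; hence any $w\in B_n(\vs_{p_R})\setminus B_n(\vs_{p_L})$ must contain the block $\sa^+$ or $\overline{\sa^+}$ somewhere. Let $i^*$ be the leftmost starting position of such a block in $w$. The prefix $x_1\dots x_{i^*-1}$ is free of these blocks by minimality of $i^*$, and by splicing on a connector word (supplied by transitivity of $\vs_{p_L}$) followed by $\overline{\sa}^\infty$ it extends to an infinite admissible $\vs_{p_L}$-sequence; thus it lies in $B_{i^*-1}(\vs_{p_L})$. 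Iterating the admissibility sandwich $\overline{\alpha(p_R)}\lle\sigma^k((x_i))\lle\alpha(p_R)$ from position $i^*$ onward shows that the $m$-block decomposition of the suffix $x_{i^*}\dots x_n$ is constrained to traverse the four-state labeled graph on the alphabet $\{\sa,\sa^+,\overline{\sa},\overline{\sa^+}\}$ appearing in Figure~\ref{fig1}, a Markov structure whose topological entropy per digit is at most $(\log 2)/m$. Summing over the possibilities for $i^*$, for the prefix in $B_{i^*-1}(\vs_{p_L})$, and for the suffix in the Markov structure gives a polynomial bound $\#B_n(\vs_{p_R})\le P(n)\,\#B_n(\vs_{p_L})$, from which $h(\vs_{p_R})\le h(\vs_{p_L})$ follows upon taking $\log$ and dividing by $n$.

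The main obstacle I anticipate is twofold. First, the splicing step for the prefix requires the transitivity-supplied connector word to absorb boundary incompatibilities between the last $m-1$ letters of the prefix and the $\overline{\sa}^\infty$ tail; this is precisely where Proposition~\ref{prop:transitive} is indispensable. Second, the polynomial bound requires the growth rate of the four-state block graph to be dominated by $h(\vs_{p_L})$, i.e.\ $(\log 2)/m\le h(\vs_{p_L})$, a fact that needs to be extracted from the sofic presentation of $\vs_{p_L}$ or from a separate counting argument exhibiting enough admissible length-$n$ words in $\vs_{p_L}$. Executing these technical steps carefully is where the bulk of the work lies, but both rely crucially on the transitivity established in Proposition~\ref{prop:transitive}.
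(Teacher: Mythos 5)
Your overall counting strategy matches the paper's: bound $\#B_n(\vs_{p_R})$ by a polynomial times $\#B_n(\vs_{p_L})$ via a decomposition of each word into a prefix admissible for $\vs_{p_L}$ and a suffix living in the four-block automaton $X_\sa$, then invoke $h(\vs_{p_L})\ge(\log 2)/m$. However, there are two issues, one of them substantive.

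First, your claim that Proposition~\ref{prop:transitive} is ``the essential tool'' and ``indispensable'' for showing the prefix $x_1\dots x_{i^*-1}$ lies in $B_{i^*-1}(\vs_{p_L})$ is both wrong and circular. The paper's proof of this proposition does not invoke transitivity at all; transitivity is used only later, in the last paragraph of Section~\ref{sec:irreducible-plateaus}, via \cite[Corollary~4.4.9]{Lind_Marcus_1995}, to force $H$ to be \emph{strictly} larger on a neighboring irreducible interval, i.e.\ to establish maximality of the plateau, not constancy on it. More importantly, your splicing argument cannot rely on transitivity of $\vs_{p_L}$: topological transitivity lets you connect $u,v\in B_*(\vs_{p_L})$, so it presupposes that $x_1\dots x_{i^*-1}\in B_*(\vs_{p_L})$, which is exactly what you are trying to prove. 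And that step is not automatic from being ``free of $\sa^+,\overline{\sa^+}$'': while $\vs_{p_L}$ is indeed the set of sequences whose $m$-windows lie in $[\overline{\sa},\sa]$, a finite word satisfying the window constraint may still be stranded and hence fail to be in $B_*(\vs_{p_L})$. The paper sidesteps all this by working directly with an infinite sequence $(x_i)\in\vs_{p_R}\setminus\vs_{p_L}$ and quoting a structural lemma from \cite{Allaart-Kong-2018b} to locate an index $j$ at which $\sigma^j((x_i))\in X_\sa$, yielding the estimate $\#B_n(\vs_{p_R})\le\sum_{j=0}^{n}\#B_j(\vs_{p_L})\,\#B_{n-j}(X_\sa)$ without any appeal to transitivity.

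Second, you correctly flag $h(\vs_{p_L})\ge(\log 2)/m$ as a remaining gap but do not close it. The paper handles it cheaply: $p_L>q_T$ forces $H(p_L)\ge H(q_T)\ge(\log 2)/2$ via Corollary~\ref{cor:entropy-of-qT}, which suffices for $m\ge 2$; when $m=1$ one needs $M\ge 3$, and then $\{a_1,\overline{a_1}\}^{\N}\subset\vs_{p_L}$ gives $H(p_L)\ge\log 2$. You would need to supply something equivalent before the polynomial bound can absorb the $2^{n/m}$ growth of $X_\sa$.
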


\begin{proof}
It is sufficient to show that $H(p_R)\leq H(p_L)$. Note that $h(\vs_q)=h(\us_q)$ for all $q$, so $H(q)=h(\vs_q)$. 

Let $[p_L,p_R]$ be generated by a fundamental word $\sa$ of length $m$, so $\alpha(p_L)=\sa^\f$. We argue first that 
\begin{equation}
H(p_L)\geq \frac{\log 2}{m}.
\label{eq:simple-lower-bound}
\end{equation}
Since $[p_L,p_R]$ is irreducible and $p_L>q_{KL}$, we have in fact that $p_L>q_T$, and so $H(p_L)\geq H(q_T)\geq (\log 2)/2$ by Corollary \ref{cor:entropy-of-qT}. This gives \eqref{eq:simple-lower-bound} when $m\geq 2$. On the other hand, if $m=1$, then we must have $M\geq 3$ and $\sa=a_1>\overline{a_1}$, since $p_L>q_T$. This implies $\{a_1,\overline{a_1}\}^\N\subset \vs_{p_L}$, so $H(p_L)\geq \log 2$ which gives \eqref{eq:simple-lower-bound} for $m=1$.

Let $h(\vs_{p_L})=\log\lambda$. Fix $\ep>0$. By the definition of $h(\vs_{p_L})$, there is a constant $C_1$ such that
\begin{equation}
\#B_n(\vs_{p_L})\leq C_1(\lambda+\ep)^n \qquad\forall\,n\geq 0.
\label{eq:word-estimate1}
\end{equation}
(Alcaraz Barrera et al. use the Perron-Frobenius theorem here, but this is not necessary.) On the other hand, if $(x_i)\in \vs_{p_R}\backslash \vs_{p_L}$, then there is an index $j$ such that $x_{j+1}\dots x_{j+m}=a_1\dots a_m$ or $\overline{a_1\dots a_m}$. This implies that $\sigma^j((x_i))\in X_\sa$ (see \cite[Lemma 3.2]{Allaart-Kong-2018b}). From the definition of $X_\sa$ it follows that there is constant $C_2$ such that
\begin{equation}
\#B_n(X_\sa)\leq C_2 2^{n/m} \qquad\forall\,n\geq 0.
\label{eq:word-estimate2}
\end{equation}
(In fact, one may take $C_2=2$.) By \eqref{eq:simple-lower-bound}, $\lambda\geq 2^{1/m}$. We thus obtain, using \eqref{eq:word-estimate1} and \eqref{eq:word-estimate2},
\begin{align*}
\#B_n(\vs_{p_R}) &\leq \sum_{j=0}^n \#B_j(\vs_{p_L})\#B_{n-j}(X_\sa)
\leq C_1 C_2\sum_{j=0}^n (\lambda+\ep)^j 2^{(n-j)/m}\\
&\leq C_1 C_2\sum_{j=0}^n (\lambda+\ep)^n=C_1 C_2(n+1)(\lambda+\ep)^n.
\end{align*}
Hence, $H(p_R)=h(\vs_{p_R})\leq \log(\lambda+\ep)$. Letting $\ep\to 0$ completes the proof.
\end{proof}

We now complete the proof of Theorem \ref{thm:entropy-plateaus-detail}(i). First, let $[p_L,p_R]$ be an irreducible interval in $(q_{KL},M+1]$. By Proposition \ref{prop:constant-entropy}, $H$ is constant on $[p_L,p_R]$. By Lemma \ref{lem:dense-intervals}, the irreducible intervals are dense in $(q_T,M+1]$. Since irreducible intervals are mutually disjoint, this means that for every $\ep>0$ there is an irreducible interval $[q_L,q_R]$ such that $p_R<q_L<p_R+\ep$. Recall that $\vs_{q_L}$ is a subshift of finite type, and by Proposition \ref{prop:transitive}, it is transitive. Furthermore, $\vs_{p_L}$ is a proper subshift of $\vs_{q_L}$. Thus, by \cite[Corollary 4.4.9]{Lind_Marcus_1995}, $H(p_L)<H(q_L)$. It follows that $H(p_R+\ep)\geq H(q_L)>H(p_R)$. By similar reasoning, $H(p_L-\ep)<H(p_L)$ for every $\ep>0$. Hence, $[p_L,p_R]$ is an entropy plateau.

Vice versa, every entropy plateau in $(q_T,M+1]$ must be an irreducible interval, because the irreducible intervals are dense in $(q_T,M+1]$. This completes the proof.

\section*{Acknowledgment}
The author wishes to thank Derong Kong for helpful comments on an earlier draft of the manuscript, and two anonymous referees for their careful reading of the paper and for several helpful suggestions that led to an improved presentation.

\footnotesize

\end{document}